\newtheorem{thm}{Theorem}[section]
\newtheorem{lem}[thm]{Lemma}
\newtheorem{coro}[thm]{Corollary}
\newtheorem{prop}[thm]{Proposition}
\newtheorem{conj}[thm]{Conjecture}
\newtheorem*{NI}{Newton Inequality}
\theoremstyle{definition}
\newtheorem{exm}[thm]{Example}
\newtheorem{rem}[thm]{Remark}
\numberwithin{equation}{section}
\newcommand{\lrf}[1]{\lfloor #1\rfloor}
\def\la{\lambda}
\def\sg{\sigma}
\def\sp{\preccurlyeq}
\journal{arXiv}
\begin{document}

\begin{frontmatter}

\title{Analytic properties of sextet polynomials of hexagonal systems}
\author[a]{Guanru Li\corref{cor1}}
\ead{li\_guanru@hotmail.com}
\author[b]{Lily Li Liu}
\ead{liulily@qfnu.edu.cn}
\author[a]{Yi Wang\corref{cor2}}
\ead{wangyi@dlut.edu.cn}
\cortext[cor2]{Corresponding author.}

\address[a]{School of Mathematical Sciences, Dalian University of Technology, Dalian 116024, P.R. China}
\address[b]{School of Mathematical Sciences, Qufu Normal University, Qufu 273165, P.R. China}

\begin{abstract}
In this paper we investigate analytic properties of sextet polynomials of hexagonal systems.
For the pyrene chains,
we show that zeros of the sextet polynomials $P_n(x)$ are real,
located in the open interval $(-3-2\sqrt{2},-3+2\sqrt{2})$
and dense in the corresponding closed interval.
We also show that coefficients of $P_n(x)$ are
symmetric, unimodal, log-concave, and asymptotically normal.
For general hexagonal systems,
we show that real zeros of all sextet polynomials are dense in the interval $(-\infty,0]$,
and conjecture that every sextet polynomial has log-concave coefficients.
\end{abstract}

\begin{keyword}
hexagonal system\sep sextet polynomial\sep
real zero\sep unimodal\sep log-concavity
\MSC[2010]05C31\sep 92E10\sep 26C10
\end{keyword}

\end{frontmatter}

\section{Introduction}

A {\it benzenoid system} or {\it hexagonal system}
is a finite connected plane graph without cut vertices
in which every interior face is bounded by a regular hexagon of side length $1$.
Topological properties of hexagonal systems is quite important in various quantum mechanical models of
the electronic structure of benzenoid hydrocarbons,
especially in resonance theory,
H\"{u}ckel molecular orbital theory,
Clar's aromatic sextet theory and
the theory of conjugated circuits \cite{Gut83,GC89}.
In order to develop a general algebraic relation
between the application of resonance theory and hexagonal systems,
Hosoya and Yamaguchi \cite{HY75} introduced the concept of the sextet polynomial,
which is the first genuine mathematical object in the aromatic sextet theory,
and demonstrated various resonance-theoretical approaches can be unified by means of sextet polynomial.


Let $H$ be a hexagonal system with at least one Kekul\'{e} structure
(or perfect matching in graph theory).
A {\it Clar cover} of $H$ is a spanning subgraph of $H$
each (connected) component of which is either a hexagon or an edge.
A {\it resonant pattern} of $H$ is a set of hexagons of a Clar cover of $H$.
The {\it Clar number} $C(H)$ of $H$ is the maximum number of hexagons
in a resonant pattern of $H$.
The {\it sextet polynomial} of $H$ is defined as
$$\sg(H, x)=\sum_{k=0}^{C(H)}s(H, k)x^k,$$
where $s(H,k)$ denotes
the number of resonant patterns of $H$ having precisely $k$ hexagons
and $s(H,0)=1$.

A hexagonal system $H$ is {\it cata-condensed} if
no three hexagons of $H$ have a common vertex,
and {\it peri-condensed} otherwise.
A cata-condensed hexagonal system $H$
is said to be {\it non-branched}
if every hexagon of $H$ has at most two neighbours.
In Figure \ref{TPC},
the triphenylene $T$ is cata-condensed,
the picene $P$ is non-branched,
and the coronene $C$ is peri-condensed.
We say that a hexagonal system $H$ is {\it thin}
if $H$ has no coronene $C$ as its nice subgraph
(a subgraph $C$ of $H$ is called {\it nice}
if either $H-C$ has a perfect matching or $H-C$ is empty).

\begin{figure}[ht]
  \centering
  \includegraphics[width=11cm]{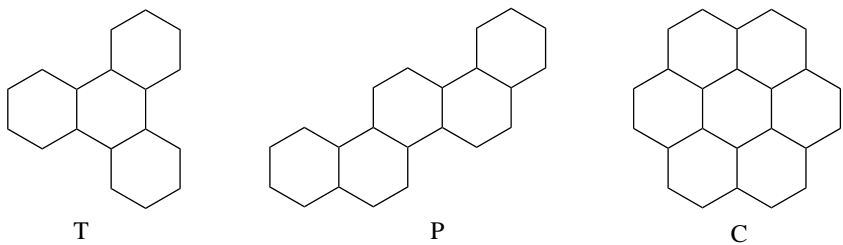}\\
  \caption{Triphenylene $T$, picene $P$ and coronene $C$}\label{TPC}
\end{figure}

Gutman \cite{Gut77} showed that the sextet polynomial of a non-branched cata-condensed hexagonal system
is precisely the matching polynomial of the corresponding Gutman tree.
He \cite{Gut82} also showed that the sextet polynomial of a resonant hexagonal system
coincides with the independence polynomial of the corresponding Clar graph.
Analytic properties of
the matching and independence polynomials of graphs have been extensively investigated.
For example,
Gutman and Godsil \cite{GG81} showed that zeros of the matching polynomial of a graph are all real.
Godsil \cite{God81} showed that the matching numbers of many graph sequences are approximately normally distributed.
Chudnovsky and Seymour \cite{CS07} showed that the independence polynomial of a clawfree graph has only real zeros,
Brown et al. \cite{BHN04} showed that real zeros of independence polynomials are dense in $(-\infty,0]$,
while complex zeros are dense in $\mathbb{C}$.
For the sextet polynomials of hexagonal systems,
Gutman \cite{Gut83}
asked for which hexagonal systems the
sextet polynomials have only real zeros.
Such zeros can be used for the calculation of the resonance energy and the aromaticity of hexagonal systems \cite{Aih77}.
Gutman \cite[Conjecture 44]{Gut83} also conjectured that every sextet polynomial has unimodal coefficients.
However, there is no systematic study of analytic properties of the sextet polynomials.


As a tentative research,
we consider the sextet polynomials $P_n(x)$ of the pyrene chains $P_n$ (see Figure~\ref{Pn}).

\begin{figure}[ht]
  \centering
  \includegraphics[width=6.3cm]{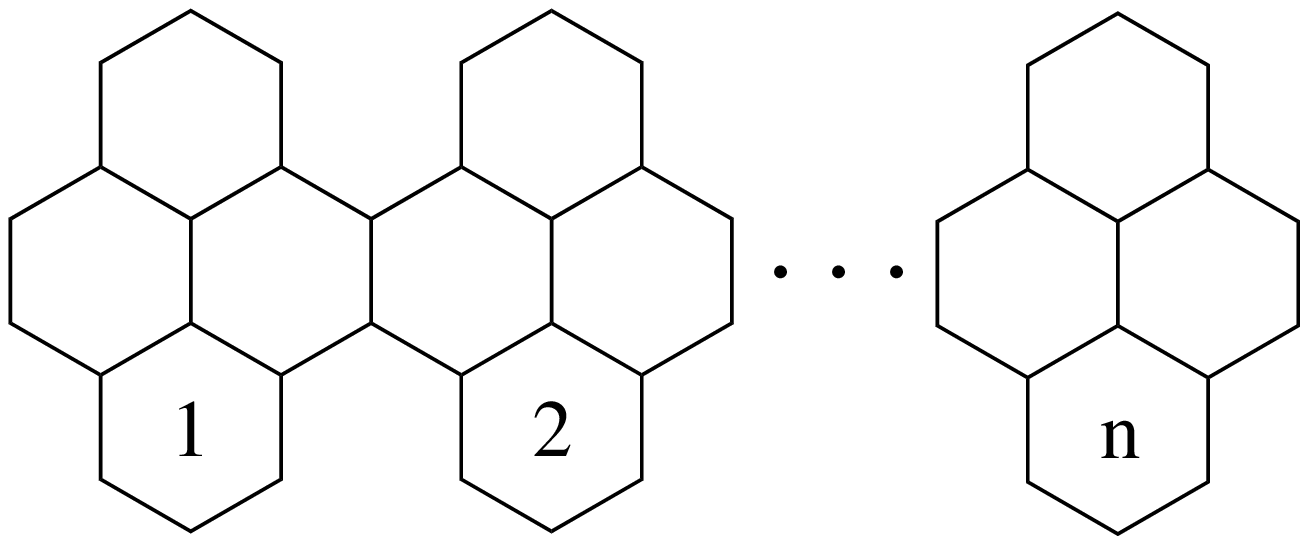}\\
  \caption{The pyrene chain $P_n$}\label{Pn}
\end{figure}

Define $P_0(x)=1$ for convenience.
Then
\begin{eqnarray*}
   P_1(x)&=&x^2+4x+1,\\
   P_2(x)&=&x^4+8x^3+17x^2+8x+1,  \\
   P_3(x) &=& 
   x^6+12x^5+49x^4+80x^3+49x^2+12x+1
\end{eqnarray*}
(see \cite{OH83} for instance).

The paper is organized as follows.
In the next section,
we show that zeros of all $P_n(x)$ are real,
located in the open interval $(-3-2\sqrt{2},-3+2\sqrt{2})$
and dense in the corresponding closed interval.
We also show that coefficients of $P_n(x)$ are
symmetric, unimodal, log-concave, and asymptotically normal
(by central and local limit theorems).
In \S 3, we provide more examples of hexagonal systems
whose sextet polynomials have only real zeros.
We show that real zeros of sextet polynomials of hexagonal systems are dense in the interval $(-\infty,0]$.
In the final section we suggest several open problems for further work.
In particular, we conjecture that
the sextet polynomial of a hexagonal system has log-concave coefficients.

Throughout this paper,
all terms used but not defined can be found in \cite{GC89}.

\section{Sextet polynomials of the pyrene chains}

In this section we devote to analytic aspects of the sextet polynomials $P_n(x)$ of the pyrene chain $P_n$.

Note first that $P_n$ are resonant hexagonal systems.
Hence the sextet polynomials $P_n(x)$ are precisely the independence polynomial of the corresponding Clar graph
(see Figure \ref{P-C}).

\begin{figure}[ht]
  \centering
  \includegraphics[width=6cm]{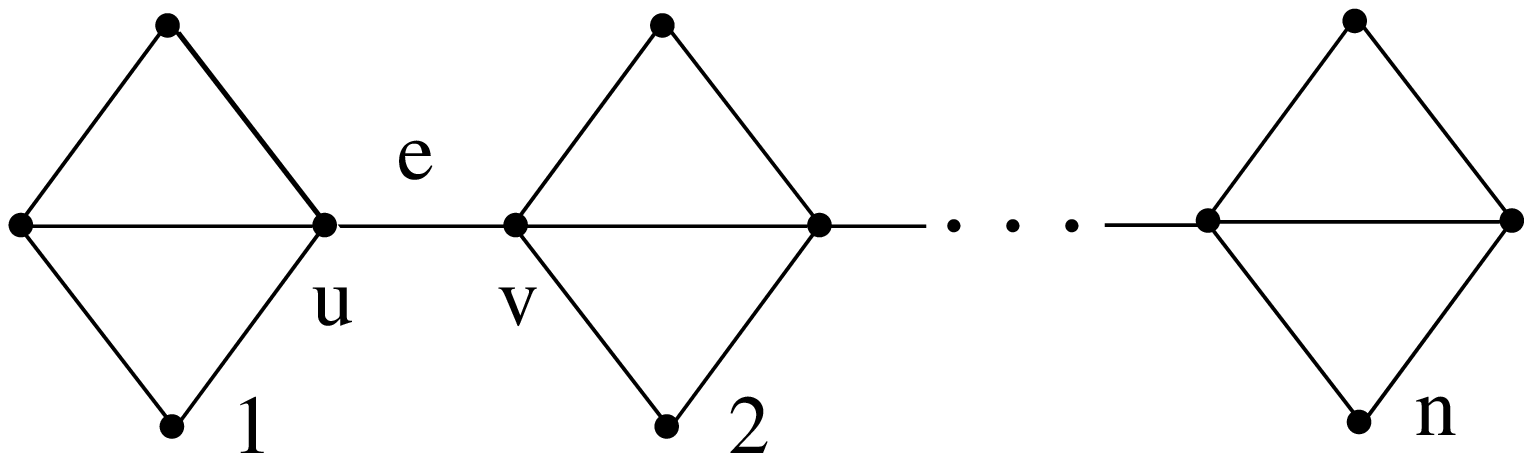}\\
  \caption{Clar graph of $P_n$}\label{P-C}
\end{figure}

Let $I(G,x)$ be the independence polynomial of a graph $G$.
Then for $uv\in E(G)$,
$$I(G,x)=I(G-uv,x)-x^2I(G-N(u)\cup N(v),x)$$
(see \cite[Theorem 3.9]{HL94} for instance).
It immediately follows that
\begin{equation}\label{d-rr}
P_n(x)=(x^2+4x+1)P_{n-1}(x)-x^2P_{n-2}(x)
\end{equation}
(see also \cite{OH83, WZ11}).

The following properties of $P_n(x)$ are immediate from the recurrence relation \eqref{d-rr}.

\begin{enumerate}[\rm (i)]
  \item 
  $\deg P_n(x)=2n$.
  \item $P_n(-1)=(n+1)(-1)^n$.
  \item $(x^2+4x+1)|P_{2n+1}(x)$ for $n\ge 0$.
\end{enumerate}

Further we may obtain various formulas of $P_n(x)$ from the recurrence relation \eqref{d-rr}.

\begin{prop}\label{pn-f}
\begin{enumerate}[\rm (i)]
  \item The sextet polynomials $P_n(x)$ have the generating function
\begin{equation}\label{d-gf}
\sum_{n\ge 0}P_n(x)y^n=\frac{1}{x^2y^2-(x^2+4x+1)y+1},
\end{equation}
  \item The sextet polynomials $P_n(x)$ have the Binet form
\begin{equation}\label{d-bf}
P_n(x)=\frac{\la_1^{n+1}-\la_2^{n+1}}{\la_1-\la_2},
\end{equation}
where
\begin{equation}\label{la12}
\la_{1,2}=\frac{x^2+4x+1\pm (x+1)\sqrt{x^2+6x+1}}{2}
\end{equation}
are two roots of the characteristic equation $\la^2-(x^2+4x+1)\la+x^2=0$.
  \item  The sextet polynomials $P_n(x)$ can be written as
\begin{equation}\label{d-gp}
P_n(x)=\sum_{i=0}^{n}\binom{2n+1-i}{i}x^i(x+1)^{2n-2i}.
\end{equation}
  \item The sextet polynomials $P_n(x)$ have the explicit expression
\begin{equation}\label{d-ee}
P_n(x)=\sum_{k=0}^{2n}\left[\sum_{i=0}^k\binom{2n+1-i}{i}\binom{2n-2i}{k-i}\right]x^k.
\end{equation}
In other words,
\begin{equation}\label{spk}
s(P_n,k)=\sum_{i=0}^k\binom{2n+1-i}{i}\binom{2n-2i}{k-i},\quad k=0,1,2,\ldots,2n.
\end{equation}
\end{enumerate}
\end{prop}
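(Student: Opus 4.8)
The plan is to treat the four parts in order, deriving (i) and (ii) directly from the recurrence \eqref{d-rr}, then obtaining (iii) by matching generating functions and (iv) as an immediate corollary of (iii).

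For (i), I would multiply \eqref{d-rr} by $y^n$ and sum over $n\ge 2$. Writing $F(y)=\sum_{n\ge 0}P_n(x)y^n$ and using $P_0(x)=1$, $P_1(x)=x^2+4x+1$, the shifted sums $\sum_{n\ge2}P_{n-1}y^n$ and $\sum_{n\ge2}P_{n-2}y^n$ reassemble into $y(F(y)-1)$ and $y^2F(y)$, and after collecting terms one finds $F(y)\bigl(1-(x^2+4x+1)y+x^2y^2\bigr)=1$, which is \eqref{d-gf}. For (ii), I would factor the denominator of \eqref{d-gf}. The characteristic polynomial $\la^2-(x^2+4x+1)\la+x^2$ has roots $\la_{1,2}$ with $\la_1+\la_2=x^2+4x+1$ and $\la_1\la_2=x^2$; its discriminant is $(x^2+4x+1)^2-4x^2=(x+1)^2(x^2+6x+1)$, which gives \eqref{la12}, and moreover $1-(x^2+4x+1)y+x^2y^2=(1-\la_1y)(1-\la_2y)$. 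The partial-fraction decomposition $\frac{1}{(1-\la_1y)(1-\la_2y)}=\frac{1}{\la_1-\la_2}\left(\frac{\la_1}{1-\la_1y}-\frac{\la_2}{1-\la_2y}\right)$ then expands into $\sum_n\frac{\la_1^{n+1}-\la_2^{n+1}}{\la_1-\la_2}y^n$, yielding \eqref{d-bf}; since the coefficient is symmetric in $\la_1,\la_2$ it is a genuine polynomial in $x$, so the identity holds for all $x$.

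The crux is (iii). I would introduce the two-variable polynomial $\tilde Q_n(x,u)=\sum_{i=0}^n\binom{2n+1-i}{i}x^iu^{2n-2i}$, so that the claimed \eqref{d-gp} is $\tilde Q_n(x,x+1)$, and compute its generating function directly. Reindexing by $j=n-i$ turns $\sum_n\tilde Q_n(x,u)y^n$ into $\sum_{i,j\ge 0}\binom{i+2j+1}{i}x^iu^{2j}y^{i+j}$. Summing first over $i$ via the negative binomial series $\sum_i\binom{i+2j+1}{i}(xy)^i=(1-xy)^{-(2j+2)}$ and then the resulting geometric series in $j$ gives the closed form $\bigl((1-xy)^2-u^2y\bigr)^{-1}$. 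Specializing $u=x+1$ makes the denominator equal to $1-(x^2+4x+1)y+x^2y^2$, so $\sum_n\tilde Q_n(x,x+1)y^n$ coincides with $F(y)$ from (i), and comparing coefficients of $y^n$ proves \eqref{d-gp}. I expect this bookkeeping — in particular spotting the negative binomial series after the $j=n-i$ reindexing — to be the main obstacle. An alternative is to verify that $\tilde Q_n$ satisfies $\tilde Q_n=(u^2+2x)\tilde Q_{n-1}-x^2\tilde Q_{n-2}$ by matching coefficients of the independent monomials $x^iu^{2n-2i}$, which reduces to the Pascal-type identity $\binom{2n+1-i}{i}=\binom{2n-1-i}{i}+2\binom{2n-i}{i-1}-\binom{2n-1-i}{i-2}$, and then to specialize $u=x+1$.

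Finally, (iv) is immediate from (iii): expanding $(x+1)^{2n-2i}=\sum_{m}\binom{2n-2i}{m}x^m$ in \eqref{d-gp} and collecting the coefficient of $x^k$ by setting $m=k-i$ gives $s(P_n,k)=\sum_{i=0}^k\binom{2n+1-i}{i}\binom{2n-2i}{k-i}$, which is \eqref{spk} and hence \eqref{d-ee}.
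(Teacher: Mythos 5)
Your proof is correct. Parts (i), (ii) and (iv) are exactly the ``standard combinatorial techniques'' the paper invokes without detail: summing the recurrence \eqref{d-rr} against $y^n$ to obtain the rational generating function, partial fractions and the geometric series for the Binet form, and binomial expansion of $(x+1)^{2n-2i}$ with $m=k-i$ for the explicit coefficients. The only place you genuinely diverge is part (iii): the paper obtains \eqref{d-gp} by induction on $n$ from the recurrence \eqref{d-rr}, whereas your primary route introduces $\tilde Q_n(x,u)=\sum_{i=0}^n\binom{2n+1-i}{i}x^iu^{2n-2i}$, computes $\sum_n\tilde Q_n(x,u)y^n=\bigl((1-xy)^2-u^2y\bigr)^{-1}$ via the reindexing $j=n-i$ and the negative binomial series, and specializes $u=x+1$ to match the generating function from (i). Both routes are sound; yours has the advantage of handling the initial conditions automatically and making transparent why the substitution $u=x+1$ turns the denominator into $1-(x^2+4x+1)y+x^2y^2$, while the paper's induction --- which is essentially your stated alternative, reducing to the Pascal-type identity $\binom{2n+1-i}{i}=\binom{2n-1-i}{i}+2\binom{2n-i}{i-1}-\binom{2n-1-i}{i-2}$ --- is more elementary and stays entirely inside polynomial identities. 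The computations you rely on all check out: the discriminant factors as $(x+1)^2(x^2+6x+1)$, the reindexed coefficient is $\binom{i+2j+1}{i}$, and the boundary terms of the two-variable recurrence are covered by the convention that binomial coefficients with negative lower index vanish.
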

\begin{proof}
The generating function \eqref{d-gf} and the Binet form \eqref{d-bf} of $P_n(x)$
can be obtained from the recurrence relation \eqref{d-rr} by standard combinatorial techniques
(see \cite{Wil06} for instance).
The formula \eqref{d-gp} can be obtained from the recurrence relation \eqref{d-rr} by induction on $n$,
and the expression \eqref{d-ee} follows immediately from \eqref{d-gp}.
\end{proof}

\begin{rem}
It is not difficult to give direct combinatorial interpretations of \eqref{d-gp} and \eqref{spk}.
\end{rem}

\begin{rem}
It is well known that $\sg(H,1)$ is
precisely the Kekul\'e number $K(H)$ of a thin hexagonal system $H$,
i.e. the number of Kekul\'{e} structures of $H$.
By means of Proposition \ref{pn-f},
we may give various expressions of
the Kekul\'e number of the pyrene chain $P_n$.
For example,
$$K(P_n)=\frac{1}{4\sqrt{2}}\left[(\sqrt{2}+1)^{2n+2}-(\sqrt{2}-1)^{2n+2}\right]
=\left\lfloor{\frac{(\sqrt{2}+1)^{2n+2}}{4\sqrt{2}}}\right\rfloor,$$
where $\lrf{r}$ denotes the largest integer no greater than the real number $r$.
\end{rem}

\begin{thm}\label{d-rz}
Zeros of the sextet polynomial $P_n(x)$ are real, distinct and in the open interval
$\left(-(\sqrt{2}+1)^2,-(\sqrt{2}-1)^2\right)$.
\end{thm}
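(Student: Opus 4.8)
The plan is to reduce $P_n(x)$ on the target interval to a Chebyshev polynomial of the second kind via the Binet form \eqref{d-bf}, and then to count and localize its real zeros directly. First I would observe that the endpoints are the roots of $x^2+6x+1$, so the open interval $\left(-(\sqrt2+1)^2,-(\sqrt2-1)^2\right)=(-3-2\sqrt2,-3+2\sqrt2)$ is precisely the set where the discriminant $(x+1)^2(x^2+6x+1)$ of the characteristic equation $\la^2-(x^2+4x+1)\la+x^2=0$ is negative. Throughout this interval $x<0$, and the conjugate roots satisfy $\la_1\la_2=x^2$, so $|\la_1|=|\la_2|=-x$. Writing $\la_{1,2}=-x\,e^{\pm i\theta}$ with
\begin{equation*}
\cos\theta=-\frac{x^2+4x+1}{2x},
\end{equation*}
the Binet form \eqref{d-bf} collapses to
\begin{equation*}
P_n(x)=(-x)^n\,\frac{\sin\big((n+1)\theta\big)}{\sin\theta}.
\end{equation*}

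Next, since $-x>0$, the zeros of $P_n$ inside the interval are exactly the points where $\sin\big((n+1)\theta\big)=0$ while $\sin\theta\neq0$; that is, $\theta=\theta_k:=k\pi/(n+1)$ for $k=1,\dots,n$. Each $\theta_k$ lies in $(0,\pi)$, so $\cos\theta_k\in(-1,1)$, and translating back through the relation for $\cos\theta$ yields for each $k$ the quadratic
\begin{equation*}
g_k(x):=x^2+(4+2\cos\theta_k)\,x+1=0.
\end{equation*}
I would then show that each $g_k$ contributes two genuine zeros of $P_n$ lying strictly inside the interval. The parabola $g_k$ opens upward with vertex at $x^\ast=-(2+\cos\theta_k)\in(-3,-1)$, where $g_k(x^\ast)=1-(2+\cos\theta_k)^2<0$. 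Using $x^2+4x+1=-2x$ at the endpoints $x=-3\pm2\sqrt2$, one computes $g_k=2x(\cos\theta_k-1)>0$ there. Hence $g_k$ is negative at its interior vertex and positive at both endpoints, so it has two distinct real roots, both strictly inside $(-3-2\sqrt2,-3+2\sqrt2)$; since these roots lie where the discriminant is negative, the conjugate-root description above applies and they are indeed zeros of $P_n$.

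Finally I would check distinctness and completeness. Distinct values of $k$ give distinct $\cos\theta_k$, hence distinct quadratics $g_k$; subtracting two of them shows they share no common root (a common root would force $x=0$ or $\cos\theta_j=\cos\theta_k$), and within each $g_k$ the positive discriminant gives two distinct roots. This produces $2n$ distinct real zeros in the open interval, which exhausts all zeros because $\deg P_n=2n$ by property (i), so none can fall on the boundary or outside.

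The main obstacle I anticipate is the passage from the complex Binet form to the real Chebyshev expression: one must justify the branch choice $\la_{1,2}=-x\,e^{\pm i\theta}$ (legitimate only where the discriminant is negative and $x<0$) and confirm that $\cos\theta$ remains in $(-1,1)$, so that $\theta_k\in(0,\pi)$ really does yield admissible roots. Once this reduction is secured, the counting and localization are routine; the pleasant point is that the endpoints $-3\pm2\sqrt2$ are reciprocals (their product is $1$), matching $g_k(0)=1$, so the two roots of each $g_k$ form a reciprocal pair straddling $x=-1$, which also reflects the palindromic symmetry of $P_n$.
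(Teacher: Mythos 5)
Your argument is correct, and it ultimately lands on the same quadratic factors as the paper --- your $g_k(x)=x^2+(4+2\cos\tfrac{k\pi}{n+1})x+1$ is exactly the factor $x^2+2x\bigl(1+2\cos^2\tfrac{k\pi}{2n+2}\bigr)+1$ appearing in the paper's factorization \eqref{dne} --- but you reach it by a genuinely different route. The paper first observes that $\la_{1,2}$ are perfect squares, passes to $\mu_{1,2}=\tfrac{1}{2}\bigl(x+1\pm\sqrt{x^2+6x+1}\bigr)$, and invokes the classical product formula for $\frac{\mu_1^{2n+2}-\mu_2^{2n+2}}{\mu_1^2-\mu_2^2}$ to obtain a \emph{global polynomial identity} $P_n(x)=\prod_{k=1}^n g_k(x)$ valid for all $x$; it then solves each quadratic explicitly and localizes the roots $r_k,1/r_k$ via monotonicity of $f(t)=(\sqrt{1+t^2}+t)^2$. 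You instead work locally on the interval where $\la_1,\la_2$ are complex conjugates, convert the Binet form to the Chebyshev-type expression $(-x)^n\sin((n+1)\theta)/\sin\theta$, extract the angles $\theta_k$, localize the two roots of each $g_k$ by a sign check at the vertex and at the endpoints, and close with a degree count. Your approach avoids the square-root trick and the citation to the product formula, at the cost of needing the completeness argument (that the $2n$ roots found exhaust all zeros) and the branch bookkeeping you correctly flag; the paper's identity gives the full factorization at once and, as a bonus, the closed form \eqref{rk} for the roots used later for density and for the variance computation. One trivial imprecision: the discriminant $(x+1)^2(x^2+6x+1)$ vanishes (rather than being negative) at $x=-1$, which lies inside your interval; this is harmless because $g_k(-1)=-2(1+\cos\theta_k)<0$, so none of the roots you produce equals $-1$, but the sentence claiming the open interval is ``precisely'' the negativity set of the discriminant should be adjusted accordingly.
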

\begin{proof}
Note that
$$\frac{x^2+4x+1\pm (x+1)\sqrt{x^2+6x+1}}{2}
=\left[\frac{x+1\pm \sqrt{x^2+6x+1}}{2}\right]^2.$$
Hence by \eqref{la12},
\begin{equation}\label{P-mu}
P_n(x)=\frac{\mu_1^{2n+2}-\mu_2^{2n+2}}{\mu_1^2-\mu_2^2},\quad
\mu_{1,2}=\frac{x+1\pm \sqrt{x^2+6x+1}}{2}.
\end{equation}
However,
$$\mu_1^{2n+2}-\mu_2^{2n+2}=(\mu_1^2-\mu_2^2)
\prod_{k=1}^n\left[(\mu_1+\mu_2)^2-4\mu_1\mu_2\cos^2\frac{k\pi}{2n+2}\right]$$
(see \cite{BC55} for instance).
Also, $\mu_1+\mu_2=x+1$ and $\mu_1\mu_2=-x$.
Hence by \eqref{P-mu},
\begin{eqnarray}\label{dne}
P_n(x)&=&\prod_{k=1}^n\left[(x+1)^2+4x\cos^2\frac{k\pi}{2n+2}\right]\nonumber\\
&=&\prod_{k=1}^n\left[x^2+2x\left(1+2\cos^2\frac{k\pi}{2n+2}\right)+1\right]\nonumber\\
&=&\prod_{k=1}^{n}\left(x+r_{k}\right)\left(x+\frac{1}{r_{k}}\right),
\end{eqnarray}
where
\begin{equation}\label{rk}
r_{k}=\left(\sqrt{1+\cos^2\frac{k\pi}{2n+2}}+\cos\frac{k\pi}{2n+2}\right)^2.
\end{equation}
Thus all zeros of $P_n(x)$ are $-r_1,\ldots,-r_n,-1/r_1,\ldots,-1/r_n$.

Clearly, $\cos\frac{k\pi}{2n+2}$ is strictly decreasing for $0\le k\le n+1$
and the function $f(x)=(\sqrt{1+x^2}+x)^2$ is strictly increasing since
$$f'(x)=\frac{2(\sqrt{1+x^2}+x)^2}{\sqrt{1+x^2}}>0,\qquad x\in\mathbb{R}.$$
Also, $r_0=(\sqrt{2}+1)^2$ and $r_{n+1}=1$.
Hence for $1\le k\le n$, all $r_k$ are distinct and $1<r_{k}<(\sqrt{2}+1)^2$.
It follows that all $1/r_k$ are also distinct and $(\sqrt{2}-1)^2<1/r_{k}<1$.
Thus
$$-(\sqrt{2}+1)^2<-r_1<\cdots<-r_n<-1<-1/r_n<\cdots<-1/r_1<-(\sqrt{2}-1)^2.$$
Zeros of $P_n(x)$ are therefore distinct real numbers in 
\end{proof}

\begin{rem}
From the explicit expression \eqref{rk}
it is easy to see that zeros of all $P_n(x)$ are dense in the closed interval
$\left[-(\sqrt{2}+1)^2, -(\sqrt{2}-1)^2\right]$.
See Example \ref{P-d} for a stronger result.
\end{rem}

\begin{rem}
Let
\begin{equation}\label{D-mu}
D_n(x)=\frac{\mu_1^{n+1}-\mu_2^{n+1}}{\mu_1-\mu_2},
\end{equation}
where $\mu_1+\mu_2=x+1$ and $\mu_1\mu_2=-x$.
Then $D_0(x)=1, D_1(x)=x+1$ and
\begin{equation}\label{D-rr}
D_n(x)=(x+1)D_{n-1}(x)+xD_{n-2}(x),\quad n\ge 2.
\end{equation}
By \eqref{P-mu},
\begin{equation}\label{P-D}
P_n(x)=\frac{D_{2n+1}(x)}{x+1}.
\end{equation}
It is worth noting that $D_n(x)$ are the Delannoy polynomials
(see \cite{WZC19} for instance).
\end{rem}

\begin{rem}
Let $H$ be a hexagonal system whose sextet polynomial has only real zeros.
Suppose that $\sg(H,x)=\prod_k(x+r_k)$.
Then {\it Aihara's resonant energy} of $H$ is
$$RE^*(H)=\sum_{k}1/\sqrt{r_k}$$
(see \cite{Aih77} or \cite[\S 7.5.1]{GC89} for instance).
It follows from \eqref{dne} that
$$RE^*(P_n)=2\sum_{k=1}^n\sqrt{1+\cos^2\frac{k\pi}{2n+2}}
\rightarrow \frac{4n}{\pi}\int_{0}^{\frac{\pi}{2}}\sqrt{1+\cos^2\theta}d\theta
\approx 2.432n.$$
\end{rem}


In what follows we pay our attention to coefficients $s(P_n,k)$ of the sextet polynomials $P_n(x)$.

Let $a_0,a_1,\ldots,a_n$ be a finite sequence of nonnegative numbers.
We say that the sequence is {\it symmetric} if $a_k=a_{n-k}$ for $0\le k\le n$.
We say that the sequence is {\it log-concave} if $a_{k-1}a_{k+1}\le a_k^2$ for $0<k<n$,
and {\it unimodal} if
$$a_0\le a_1\le\cdots\le a_m\ge a_{m+1}\ge\cdots\ge a_n$$
for some $m$.
Clearly, a log-concave sequence of positive numbers is unimodal.
A classical approach for attacking the unimodality and log-concavity problem
is to use the following.

\begin{NI}
Suppose that the real polynomial $f(x)=\sum_{k=0}^na_kx^k$ has only real zeros. Then
$$a_k^2\ge a_{k-1}a_{k+1}\frac{(k+1)(n-k+1)}{k(n-k)},\qquad k=1,2,\ldots,n-1.$$
If all $a_k$'s are nonnegative,
then the sequence $a_0,a_1,\ldots,a_n$ of coefficients is log-concave and unimodal.
\end{NI}

For convenience, we say that a polynomial is symmetric (resp., unimodal, log-concave)
if the sequence of its coefficients has such a property.
Let $f(x)$ be a polynomial of degree $n$ with nonzero constant term.
It is claer that $f(x)$ is symmetric if and only if $x^nf(1/x)=f(x)$.

\begin{thm}
The sextet polynomial $P_n(x)$ has symmetric, unimodal and log-concave coefficients.
\end{thm}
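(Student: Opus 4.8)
The plan is to obtain all three properties from facts already in hand: symmetry from the reciprocal pairing of zeros displayed in \eqref{dne}, and unimodality together with log-concavity from the real-rootedness of Theorem \ref{d-rz} via the Newton Inequality.

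First I would settle symmetry. The factorization \eqref{dne} writes
$$P_n(x)=\prod_{k=1}^{n}(x+r_k)\left(x+\frac{1}{r_k}\right),$$
so the zeros come in reciprocal pairs $-r_k,-1/r_k$, and from this product both the leading coefficient and the constant term $P_n(0)=\prod_{k=1}^n r_k\cdot(1/r_k)$ equal $1$. I would then verify the stated criterion $x^{2n}P_n(1/x)=P_n(x)$ directly: substituting $1/x$ and multiplying by $x^{2n}$, each factor $(1/x+r_k)(1/x+1/r_k)$ clears to $r_k^{-1}(1+r_kx)(x+r_k)$, and the identity $1+r_kx=r_k(x+1/r_k)$ cancels the prefactor, returning the original pair $(x+r_k)(x+1/r_k)$. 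Hence $x^{2n}P_n(1/x)=P_n(x)$ and the sequence $s(P_n,k)$ is symmetric.

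Next I would address log-concavity and unimodality. Theorem \ref{d-rz} guarantees that $P_n(x)$ has only real zeros, and its coefficients $s(P_n,k)$ are nonnegative (they count resonant patterns, and equivalently arise by expanding \eqref{dne} with every $r_k>0$). The Newton Inequality then applies and yields
$$s(P_n,k)^2\ge s(P_n,k-1)\,s(P_n,k+1)\,\frac{(k+1)(2n-k+1)}{k(2n-k)}\ge s(P_n,k-1)\,s(P_n,k+1),$$
the last step because the displayed fraction exceeds $1$ for $1\le k\le 2n-1$. Thus $P_n(x)$ is log-concave, and being a positive log-concave sequence it is unimodal, as remarked above.

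The substantive input is entirely Theorem \ref{d-rz}; once real-rootedness is granted, the Newton Inequality does the rest mechanically. The only place demanding a little care is the symmetry verification, where one must track the cancellation of the factors $r_k$ in passing from $x^{2n}P_n(1/x)$ back to $P_n(x)$. I expect that small piece of reciprocal bookkeeping, rather than any deep difficulty, to be the main obstacle.
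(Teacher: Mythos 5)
Your proposal is correct and follows essentially the same route as the paper: symmetry is read off from the reciprocal pairing of zeros in \eqref{dne} (the paper also notes it can be seen from \eqref{spk}), and unimodality and log-concavity follow from the real-rootedness of Theorem \ref{d-rz} together with the Newton Inequality. The paper's proof is just a terser version of yours, with the reciprocal bookkeeping left implicit.
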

\begin{proof}
The symmetry is easily obtained from \eqref{spk} or \eqref{dne}.
The unimodality and log-concavity follows immediately from the Newton inequality.
\end{proof}

Let $a(n,k)$ be a double-indexed sequence of nonnegative numbers and let
\begin{equation*}\label{pnk}
p(n,k)=\frac{a(n,k)}{\sum_{j}a(n,j)}
\end{equation*}
denote the normalized probabilities.
Following Bender~\cite{Ben73},
we say that the sequence $a(n,k)$ is {\it asymptotically normal by a central limit theorem},
if
\begin{equation}\label{clt}
\lim_{n\rightarrow\infty}\sup_{x\in\mathbb{R}}\left|\sum_{k\le\mu_n+x\sigma_n}p(n,k)-\frac{1}{\sqrt{2\pi}}\int_{-\infty}^xe^{-t^2/2}dt\right|=0,
\end{equation}
where $\mu_n$ and $\sigma^2_n$ are the mean and variance of $a(n,k)$, respectively.
We say that $a(n,k)$ is {\it asymptotically normal by a local limit theorem} on $\mathbb{R}$ if
\begin{equation}\label{llt}
\lim_{n\rightarrow\infty}\sup_{x\in\mathbb{R}}\left|\sigma_np(n,\lfloor\mu_n+x\sigma_n\rfloor)-\frac{1}{\sqrt{2\pi}}e^{-x^2/2}\right|=0.
\end{equation}
In this case,
\begin{equation*}\label{asy}
a(n,k)\sim \frac{e^{-x^2/2}\sum_{j}a(n,j)}{\sigma_n\sqrt{2\pi}} \textrm{ as } n\rightarrow \infty,
\end{equation*}
where $k=\mu_n+x\sigma_n$ and $x=O(1)$.
Clearly, the validity of \eqref{llt} implies that of \eqref{clt}.

Many well-known combinatorial sequences enjoy central and local limit theorems.
For example,
the matching numbers of many graph sequences are approximately normally distributed \cite{God81}.
A standard approach to demonstrating asymptotic normality is the following criterion.

\begin{lem}[{\cite[Theorem 2]{Ben73}}]\label{lem-rzv}
Suppose that $A_n(x)=\sum_{k=0}^na(n,k)x^k$ have only real zeros and $A_n(x)=\prod_{i=1}^n(x+r_i)$,
where all $a(n,k)$ and $r_i$ are nonnegative.
Let $$\mu_n=\sum_{i=1}^n\frac{1}{1+r_i}$$
and $$\sigma^2_n=\sum_{i=1}^n\frac{r_i}{(1+r_i)^2}.$$
Then if $\sigma_n^2\rightarrow+\infty$,
the numbers $a(n,k)$ are asymptotically normal (by central and local limit theorems)
with the mean $\mu_n$ and variance $\sigma_n^2$.
\end{lem}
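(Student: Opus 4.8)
The plan is to reinterpret the normalized coefficients $p(n,k)$ probabilistically. Writing $p_i=1/(1+r_i)$ and $q_i=r_i/(1+r_i)=1-p_i$, the hypothesis gives the factorization
\[
\frac{A_n(x)}{A_n(1)}=\prod_{i=1}^{n}\frac{x+r_i}{1+r_i}=\prod_{i=1}^{n}\bigl(q_i+p_i x\bigr),
\]
which is exactly the probability generating function of $S_n=X_1+\cdots+X_n$, where the $X_i$ are independent Bernoulli random variables with $\Pr(X_i=1)=p_i$. Hence $p(n,k)=\Pr(S_n=k)$, and a direct computation gives $\mathbb{E}[S_n]=\sum_i p_i=\mu_n$ and $\mathrm{Var}(S_n)=\sum_i p_iq_i=\sum_i r_i/(1+r_i)^2=\sigma_n^2$, matching the stated parameters. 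This reduces the whole assertion to a limit theorem for a sum of independent, uniformly bounded (indeed $\{0,1\}$-valued) random variables.

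For the central limit theorem \eqref{clt} I would invoke the Lindeberg--Feller theorem, the key point being that boundedness makes the Lindeberg condition automatic: since $|X_i-p_i|\le 1$, each truncated second moment $\mathbb{E}\bigl[(X_i-p_i)^2\mathbf{1}_{\{|X_i-p_i|>\varepsilon\sigma_n\}}\bigr]$ vanishes once $\sigma_n>1/\varepsilon$, so the Lindeberg ratio is identically zero for large $n$ as soon as $\sigma_n^2\to+\infty$. Thus $(S_n-\mu_n)/\sigma_n$ converges in distribution to the standard normal, which is precisely \eqref{clt}.

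The harder part is the local limit theorem \eqref{llt}, and here I would pass to characteristic functions. Let $\phi_n(t)=\mathbb{E}[e^{itS_n}]=\prod_i(q_i+p_ie^{it})$. Since $S_n$ is integer valued, Fourier inversion gives $p(n,k)=\frac{1}{2\pi}\int_{-\pi}^{\pi}\phi_n(t)e^{-ikt}\,dt$, and the strategy is to show that after centering at $\mu_n$ and rescaling $t=s/\sigma_n$ this integral is dominated by a small neighbourhood of the origin, on which $\phi_n$ is Gaussian. The crucial estimate is
\[
|q_i+p_ie^{it}|^2=1-2p_iq_i(1-\cos t)\le \exp\bigl(-2p_iq_i(1-\cos t)\bigr),
\]
whence $|\phi_n(t)|\le\exp\bigl(-(1-\cos t)\sigma_n^2\bigr)$; for $t$ bounded away from $0$ this is $\le e^{-c\sigma_n^2}\to 0$, so the tail of the inversion integral is negligible, while a second-order expansion of $\log\phi_n(s/\sigma_n)$ near the origin produces the factor $e^{-s^2/2}$. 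Assembling the two pieces yields the uniform estimate \eqref{llt}; this is in essence Gnedenko's local limit theorem for lattice distributions. I expect the main obstacle to lie precisely in this Fourier step: controlling $\phi_n$ uniformly in $t$ away from the origin and matching the error terms in the central expansion, both of which rely on the hypothesis $\sigma_n^2\to+\infty$.
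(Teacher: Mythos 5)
The paper offers no proof of this lemma at all: it is quoted verbatim as Theorem~2 of Bender's 1973 paper, so there is nothing internal to compare against. Your reconstruction is correct and is in fact essentially Bender's own route. The factorization $A_n(x)/A_n(1)=\prod_i(q_i+p_ix)$ with $p_i=1/(1+r_i)$ is exactly the right reduction: it identifies $p(n,k)$ with the law of a sum of independent Bernoulli variables in a triangular array, with mean $\sum_i p_i=\mu_n$ and variance $\sum_i p_iq_i=\sigma_n^2$ as claimed. The Lindeberg--Feller step is sound (uniform boundedness kills the Lindeberg ratio once $\sigma_n>1/\varepsilon$); to pass from convergence in distribution to the uniform statement \eqref{clt} you should add one word about P\'olya's theorem (uniform convergence of distribution functions to a continuous limit), which is routine. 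Your Fourier-inversion treatment of \eqref{llt} is also the standard one, and the key bound $|\phi_n(t)|\le\exp\bigl(-(1-\cos t)\sigma_n^2\bigr)$ is correct and does all the work away from the origin; near the origin one combines it with $1-\cos t\ge 2t^2/\pi^2$ on $[-\pi,\pi]$ to get an integrable dominating function, and the second-order expansion of $\log\phi_n(s/\sigma_n)$ (with third-order error controlled by $\sum_i p_iq_i/\sigma_n^3=1/\sigma_n\to0$) gives the Gaussian factor. Uniformity in $x$ of \eqref{llt} comes for free since the dependence on $x$ enters only through the bounded factor $e^{-ixs}$ in the inversion integral. In short: no gaps of substance, only the two standard technical points (P\'olya's theorem and the dominated-convergence bookkeeping) to spell out.
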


\begin{thm}\label{dnk-an}
The resonant sextet numbers $s(P_n,k)$ are asymptotically normal (by central and local limit theorems)
with the mean $\mu_n=n$ and variance $\sigma_n^2\sim\sqrt{2}n/4$.
\end{thm}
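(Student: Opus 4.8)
The plan is to invoke Bender's criterion, Lemma~\ref{lem-rzv}, so that the whole task reduces to identifying the roots, computing $\mu_n$ and $\sigma_n^2$, and verifying $\sigma_n^2\to\infty$. By Theorem~\ref{d-rz} the polynomial factors as $P_n(x)=\prod_{k=1}^n(x+r_k)(x+1/r_k)$ with every $r_k>0$ given by \eqref{rk}, so the hypotheses of Lemma~\ref{lem-rzv} are met with the $2n$ nonnegative numbers $r_1,\dots,r_n,1/r_1,\dots,1/r_n$. The guiding observation is that the roots occur in reciprocal pairs, which should make both $\mu_n$ and $\sigma_n^2$ collapse to clean trigonometric sums.

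For the mean I would pair the contributions of $r_k$ and $1/r_k$ in Bender's formula: since $\frac{1}{1+r_k}+\frac{1}{1+1/r_k}=\frac{1}{1+r_k}+\frac{r_k}{1+r_k}=1$, each reciprocal pair contributes exactly $1$, whence $\mu_n=n$ (consistent with the symmetry of $P_n(x)$, for which a degree-$2n$ polynomial must have mean $n$). The same pairing handles the variance, because $\frac{1/r_k}{(1+1/r_k)^2}=\frac{r_k}{(1+r_k)^2}$, so that $\sigma_n^2=2\sum_{k=1}^n \frac{r_k}{(1+r_k)^2}$. The key simplification is then $\frac{(1+r_k)^2}{r_k}=r_k+\frac1{r_k}+2$ combined with the identity $r_k+\frac1{r_k}=2+4\cos^2\frac{k\pi}{2n+2}$ read off from \eqref{dne}; this gives $\frac{r_k}{(1+r_k)^2}=\frac{1}{4\left(1+\cos^2\frac{k\pi}{2n+2}\right)}$ and hence $\sigma_n^2=\frac12\sum_{k=1}^n \frac{1}{1+\cos^2\frac{k\pi}{2n+2}}$.

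Finally I would recognize this last expression as a Riemann sum. Writing $\theta_k=k\pi/(2n+2)$ with mesh $\pi/(2n+2)$, the points $\theta_1,\dots,\theta_n$ fill $[0,\pi/2]$, so $\frac1n\sum_{k=1}^n \frac{1}{1+\cos^2\theta_k}\to \frac{2}{\pi}\int_0^{\pi/2}\frac{d\theta}{1+\cos^2\theta}$. The substitution $u=\tan\theta$ evaluates the integral to $\frac{\pi}{2\sqrt2}$, giving $\sigma_n^2\sim\frac12\cdot n\cdot\frac{2}{\pi}\cdot\frac{\pi}{2\sqrt2}=\frac{\sqrt2}{4}n$. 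In particular $\sigma_n^2\to\infty$, so Lemma~\ref{lem-rzv} applies and yields asymptotic normality by both central and local limit theorems. The only step needing genuine care is the passage to the integral: one must confirm that replacing $2n+2$ by $2n$ in the mesh and the slight truncation of the interval at both ends contribute only lower-order terms, which is routine since the summand is bounded and smooth on $[0,\pi/2]$.
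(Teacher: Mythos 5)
Your proposal is correct and follows essentially the same route as the paper: both invoke Bender's criterion (Lemma~\ref{lem-rzv}) on the factorization \eqref{dne}, exploit the reciprocal pairing of the roots to get $\mu_n=n$ and $\sigma_n^2=2\sum_{k=1}^n r_k/(1+r_k)^2$, simplify this to $\tfrac12\sum_{k=1}^n\bigl(1+\cos^2\tfrac{k\pi}{2n+2}\bigr)^{-1}$, and evaluate the resulting Riemann sum to $\tfrac{\sqrt2}{4}n$. The only cosmetic difference is that you obtain the simplification from $r_k+1/r_k=2+4\cos^2\tfrac{k\pi}{2n+2}$ read off the quadratic factors, whereas the paper uses the surd identity $1+\bigl(c+\sqrt{1+c^2}\bigr)^2=2\bigl(c+\sqrt{1+c^2}\bigr)\sqrt{1+c^2}$; these are equivalent computations.
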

\begin{proof}
The mean $\mu_n=n$ is clear by the symmetry of $s(P_n,k)=s(P_n,2n-k)$.
On the other hand,
since
$$\sum_{k=1}^n\frac{1/r_k}{(1+1/r_k)^2}=\sum_{k=1}^n\frac{r_k}{(1+r_k)^2},$$
we have by \eqref{dne}
$$\sigma_n^2=2\sum_{k=1}^n\frac{\left(\cos\frac{k\pi}{2n+2}+\sqrt{1+\cos^2\frac{k\pi}{2n+2}}\right)^2}
{\left[1+\left(\cos\frac{k\pi}{2n+2}+\sqrt{1+\cos^2\frac{k\pi}{2n+2}}\right)^2\right]^2}.$$
Note that
$$1+\left(c+\sqrt{1+c^2}\right)^2
=2\left(1+c^2+c\sqrt{1+c^2}\right)
=2\left(c+\sqrt{1+c^2}\right)\sqrt{1+c^2}.$$
Hence
$$\sigma_n^2=2\sum_{k=1}^n\frac{1}{4\left(1+\cos^2\frac{k\pi}{2n+2}\right)}
=\frac{1}{2}\sum_{k=1}^n\frac{1}{\left(1+\cos^2\frac{k}{n+1}\frac{\pi}{2}\right)}.$$
Thus
$$\sigma_n^2
\rightarrow
\frac{n}{\pi}\int_{0}^{\pi/2}\frac{1}{1+\cos^2\theta}d\theta
=\frac{n}{\pi}\left[-\frac{1}{\sqrt{2}}\arctan(\sqrt{2}\cot\theta)\right]^{\frac{\pi}{2}}_{0}
=\frac{\sqrt{2}}{4}n.$$
The statement follows from Lemma \ref{lem-rzv}.
\end{proof}

\section{Sextet polynomials with only real zeros}

In this section
we present more examples of hexagonal systems whose sextet polynomials have only real zeros.
We call $r$ a {\it sextet zero} if $r$ is one zero of the sextet polynomial of a hexagonal system.
We will show that real sextet zeros are dense in the interval $(-\infty,0]$.

Let $f(x)$ be a real polynomial with only real zeros and $\deg f=n$.
Denote its zeros by $r_n(f)\le\cdots\le r_2(f)\le r_1(f)$.
Given two real polynomials $f$ and $g$ with only real zeros,
we say that $g$ {\it interlaces} $f$, denoted by $g\sp f$,
if $\deg f=\deg g+1$ and
$$r_{n}(f)\le r_{n-1}(g)\le r_{n-1}(f)\le\cdots\le r_2(f)\le r_1(g)\le r_1(f),$$
or $\deg f=\deg g$ and
$$r_{n}(g)\le  r_{n}(f)\le r_{n-1}(g)\le r_{n-1}(f)\le\cdots\le r_2(f)\le r_1(g)\le r_1(f).$$

\begin{lem}[{\cite[Theorem 2.3]{LW07rz}}]\label{lem-lw}
Let $F(x)=a(x)f(x)+b_1(x)g_1(x)+\cdots+b_k(x)g_k(x)$,
where $F,f, a, b_j, g_j$ are real polynomials.
Suppose that
\begin{enumerate}[\rm (i)]
\item
$f,g_j$ have only real zeros and $g_j\sp f$ for each $j$;
\item
$g_1,\ldots,g_k$ and $F$ have positive leading coefficients;
\item
$\deg f\le\deg F\le\deg f+1$.
\end{enumerate}
If $b_j(r)\le 0$ for each $j$ whenever $f(r)=0$, then $F$ has only real zeros and $f\sp F$.
\end{lem}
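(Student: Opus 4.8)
The plan is to prove the statement by an elementary sign‑change argument, exploiting the fact that the term $a(x)f(x)$ vanishes at every zero of $f$, so that the values of $F$ at the zeros of $f$ are governed entirely by the interlacing polynomials $g_j$ and by the signs of the coefficients $b_j$ there. First I would assume that $f$ has only simple zeros $s_1>s_2>\cdots>s_n$ (with $n=\deg f$), deferring the degenerate cases to the end. The aim is to show that $F$ alternates in sign across these zeros, and then to count the real zeros of $F$ by the intermediate value theorem, using the degree and leading‑coefficient hypotheses (ii) and (iii) to control the behaviour on the two unbounded intervals.

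The key step is to pin down the sign of each $g_j$ at the zeros of $f$. Because $g_j\sp f$ and $g_j$ has positive leading coefficient (condition (ii)), its largest zero satisfies $r_1(g_j)\le s_1$, so $g_j(s_1)>0$; moreover the interlacing places a zero of $g_j$ in each gap $(s_{i+1},s_i)$ for $1\le i\le n-1$ (with one further zero to the left of $s_n$ when $\deg g_j=\deg f$, which is irrelevant here). Hence $\operatorname{sign}g_j(s_i)=(-1)^{i-1}$ for $1\le i\le n$. Since $f(s_i)=0$, the sign hypothesis gives $b_j(s_i)\le 0$, so each product $b_j(s_i)g_j(s_i)$ has sign $(-1)^i$ or vanishes. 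Summing over $j$ and using $f(s_i)=0$ yields
$$(-1)^iF(s_i)=(-1)^i\sum_{j}b_j(s_i)g_j(s_i)\ge 0,\qquad 1\le i\le n,$$
that is, $F$ weakly alternates in sign at the zeros of $f$; note that this uses only the positive leading coefficients of the $g_j$, not any sign assumption on $f$ itself.

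Next I would count the real zeros of $F$. Assuming for the moment strict alternation, the intermediate value theorem produces a zero of $F$ in each of the $n-1$ open gaps $(s_{i+1},s_i)$. Since $F$ has positive leading coefficient and $F(s_1)\le 0$, there is a further zero in $(s_1,+\infty)$. If $\deg F=\deg f$, these $n$ zeros exhaust $F$, so all its zeros are real and their interlacing location with the $s_i$ gives $f\sp F$. If instead $\deg F=\deg f+1$, then comparing the sign of $F$ at $-\infty$, namely $(-1)^{\deg F}=(-1)^{n+1}$, with $\operatorname{sign}F(s_n)=(-1)^n$ forces one additional zero in $(-\infty,s_n)$, for a total of $n+1$ real zeros, again interlacing $f$ in the sense required under (iii). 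In either case all zeros of $F$ are real and $f\sp F$.

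The main obstacle is the degenerate behaviour permitted by the weak (rather than strict) inequalities: $F(s_i)$ may vanish when every term $b_j(s_i)g_j(s_i)$ is zero, and $f$ may have multiple zeros, in which case an interlacing $g_j$ must share them and the naive gap count collapses. I would resolve both by a standard perturbation argument, approximating $f$ by a nearby polynomial with simple zeros, the $g_j$ by strictly interlacing perturbations with positive leading coefficients, and each $b_j$ by $b_j-\delta_j$ with small $\delta_j>0$ to enforce strict negativity at the perturbed zeros; the strict version above then applies, and letting the perturbations tend to zero recovers the claim, since real‑rootedness and the closed relation $\sp$ survive uniform limits of the coefficients by Hurwitz's theorem. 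The one technical point deserving care is checking that the perturbation can be arranged to preserve hypotheses (i)--(iii) together with the sign condition on the $b_j$.
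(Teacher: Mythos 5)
You should first note that the paper itself gives no proof of this lemma: it is imported verbatim from \cite[Theorem~2.3]{LW07rz}, so the only meaningful comparison is with the proof in that reference. Your main line of argument --- evaluating $F$ at the zeros $s_1>\cdots>s_n$ of $f$, deducing $(-1)^{i-1}g_j(s_i)\ge 0$ from $g_j\sp f$ and the positive leading coefficients, combining with $b_j(s_i)\le 0$ to get $(-1)^iF(s_i)\ge 0$, and then counting real zeros of $F$ via the intermediate value theorem together with (ii) and (iii) --- is correct in the non-degenerate case and is essentially the argument of the cited source, including the placement of the extra zero in $(s_1,+\infty)$ and, when $\deg F=\deg f+1$, in $(-\infty,s_n)$.

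The genuine gap is in your treatment of the degenerate case, and it is not merely a routine technicality. First, the weak sign data alone cannot be pushed through a limit: take $f(x)=x^3$ and $F(x)=x^3+x$; then $(-1)^iF(s_i)\ge 0$ holds at $s_1=s_2=s_3=0$, $F$ has positive leading coefficient and $\deg F=\deg f$, yet $F$ is not real-rooted. So any completion of the argument must use more than the alternation of signs, namely that an $m$-fold zero of $f$ is a zero of each $g_j$ (hence of $F$) of multiplicity at least $m-1$; these common factors must be divided out before the counting argument is applied. Second, your perturbation has to be performed on the identity $F=a f+\sum_j b_j g_j$, after which $F_\epsilon$ is \emph{determined} by the perturbed data; if $\deg(af)>\deg F$, i.e.\ the top coefficients cancel, a generic perturbation of $f$ and the $g_j$ destroys that cancellation and yields $\deg F_\epsilon>\deg f_\epsilon+1$ or a negative leading coefficient, so hypotheses (ii)--(iii), which you need in order to run the strict argument on $F_\epsilon$, are not automatically inherited. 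Both defects are repairable (factor out the shared zeros first, perturb to strict interlacing while fixing the top coefficients, and adjust $a$ by a small multiple of $x$ to restore the sign of the leading term), but as written the proof is incomplete exactly where the statement ceases to be routine.
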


Let $G$ be a graph and $M(G;x)$ the matching polynomial of $G$.
Then for arbitrary $v\in V(G)$,
$$M(G;x)=xM(G-v;x)-\sum_{u\sim v}M(G-\{u,v\};x).$$
Thus $M(G;x)$ has only real zeros and $M(G-v;x)\sp M(G;x)$ by induction and by Lemma \ref{lem-lw}.

The following corollary is an immediate consequence of Lemma \ref{lem-lw}.

\begin{coro}\label{c-3rr}
Let $(f_n(x))_{n\ge 0}$ be a sequence of polynomials of positive coefficients
with $\deg f_0=0$ and $\deg f_{n-1}\le \deg f_n\le \deg f_{n-1}+1$.
Suppose that
\begin{equation}\label{3rr}
  f_n(x)=a(x)f_{n-1}(x)+b(x)f_{n-2}(x),\quad n=2,3,\ldots.
\end{equation}
If $b(x)\le 0$ for $x\le 0$,
then all $f_n(x)$ have only real zeros and $f_{n-1}\sp f_n$.
\end{coro}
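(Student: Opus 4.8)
The plan is to argue by induction on $n$, at each step invoking Lemma \ref{lem-lw} with $F=f_n$, $f=f_{n-1}$, and a single correction term given by $k=1$, $b_1=b$, $g_1=f_{n-2}$. The key preliminary observation I would record first is that a polynomial with positive coefficients has no zero in $[0,\infty)$: for $x\ge 0$ every term is nonnegative and the constant term is strictly positive, so the value is strictly positive. Consequently, once a polynomial in the sequence is known to have only real zeros, all of those zeros are automatically negative, hence $\le 0$. This is exactly the fact that will let me feed the hypothesis ``$b(x)\le 0$ for $x\le 0$'' into the lemma.

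For the base case I would check $f_0$ and $f_1$ directly. Since $\deg f_0=0$, $f_0$ is a positive constant, trivially real-rooted with no zeros. The degree constraint forces $\deg f_1\in\{0,1\}$; in either case $f_1$ is real-rooted (a real linear polynomial always has a real zero), and the interlacing $f_0\sp f_1$ holds vacuously because $f_0$ has no zeros.

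For the inductive step, suppose $f_{n-2}$ and $f_{n-1}$ have only real zeros and $f_{n-2}\sp f_{n-1}$. I would apply Lemma \ref{lem-lw} to $f_n(x)=a(x)f_{n-1}(x)+b(x)f_{n-2}(x)$ with the identifications above. Hypothesis (i) is precisely the induction hypothesis; (ii) holds since $f_{n-2}$ and $f_n$ have positive leading coefficients (all their coefficients are positive); and (iii) is the given degree constraint $\deg f_{n-1}\le \deg f_n\le \deg f_{n-1}+1$. The sign condition ``$b_1(r)\le 0$ whenever $f(r)=0$'' is where the preliminary observation enters: every zero $r$ of $f_{n-1}$ is negative, so $r\le 0$, whence $b(r)\le 0$ by hypothesis. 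Lemma \ref{lem-lw} then delivers that $f_n$ has only real zeros and $f_{n-1}\sp f_n$, closing the induction.

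I do not expect a genuine obstacle, since the statement is advertised as an immediate consequence of Lemma \ref{lem-lw}. The only point requiring care is the translation of the global inequality ``$b(x)\le 0$ for $x\le 0$'' into the pointwise sign hypothesis of the lemma, and this is exactly why the positivity of the coefficients -- which confines all real zeros of $f_{n-1}$ to the negative axis -- is the essential ingredient.
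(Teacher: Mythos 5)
Your proof is correct and follows exactly the route the paper intends: the paper gives no explicit argument, stating only that the corollary is an immediate consequence of Lemma \ref{lem-lw}, and your induction with $F=f_n$, $f=f_{n-1}$, $g_1=f_{n-2}$, $b_1=b$ is precisely the intended filling-in of details. The observation that positivity of the coefficients confines all real zeros of $f_{n-1}$ to the negative axis, which converts the hypothesis $b(x)\le 0$ for $x\le 0$ into the pointwise sign condition of the lemma, is exactly the right (and only nontrivial) step.
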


\begin{exm}
Recall that the sextet polynomials $P_n(x)$ of the pyrene chain satisfy
$P_n(x)=D_{2n+1}(x)/(x+1)$,
where $D_n$ 
satisfy the recurrence relation
$D_n(x)=(x+1)D_{n-1}(x)+xD_{n-2}(x)$.
From Corollary \ref{c-3rr} it follows that all $D_n(x)$ have only real zeros,
and so do all $P_n(x)$.
\end{exm}

\begin{figure}[h]
  \centering
  \includegraphics[width=13cm]{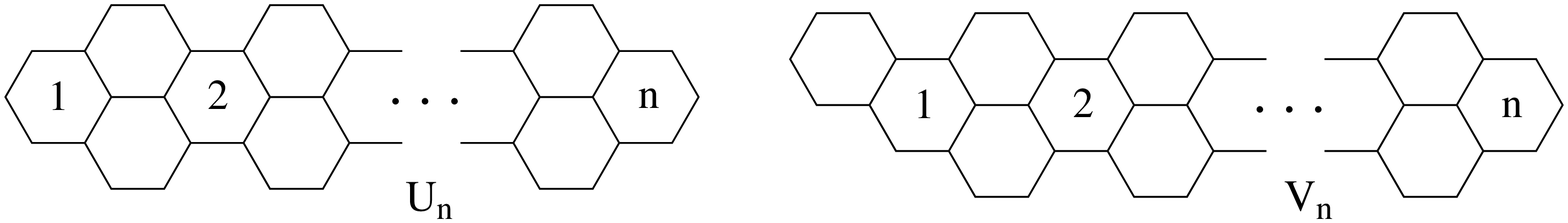}\\
  \caption{$U_n$ and $V_n$}\label{UV}
\end{figure}

\begin{exm}
Let $U_n$ and $V_n$ be hexagonal systems as shown in Figure \ref{UV}.
Let $U_n(x)$ and $V_n(x)$ be the sextet polynomials corresponding to $U_n$ and $V_n$.
Then
$$U_n(x)=(2x+1)U_{n-1}(x)+x(1-x)U_{n-2}(x),\quad U_0(x)=1,\quad U_1(x)=x+1$$
and
$$V_n(x)=(2x+1)V_{n-1}(x)+x(1-x)V_{n-2}(x),\quad V_0(x)=1,\quad V_1(x)=2x+1$$
(see \cite{OH84} for instance).
Thus all $U_n(x)$ and $V_n(x)$ have only real zeros by Corollary \ref{c-3rr}.
\end{exm}

\begin{exm}
Let $L_n$ be the line hexagonal chains with $n$ hexagons
shown as in Figure \ref{Ln}.
Then it is clear that the corresponding sextet polynomial is $L_n(x)=nx+1$.

\begin{figure}[h]
\centering
  \includegraphics[width=5.7cm]{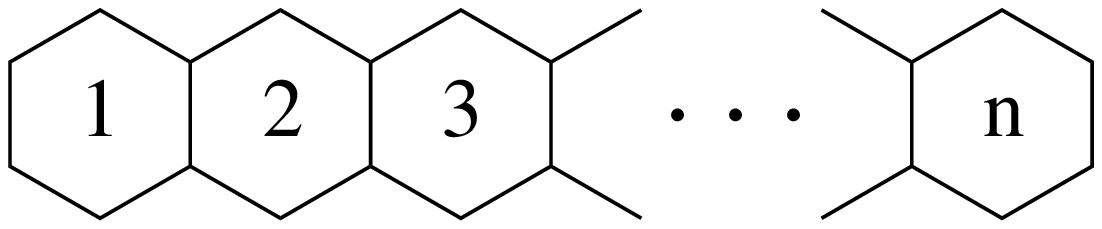}\\
  \caption{$L_n$}\label{Ln}
\end{figure}

For $m\ge 2$, let $L^{(m)}_n$ be the hexagonal chains shown as in Figure~\ref{Lmn}
and let $L^{(m)}_n(x)$ be the corresponding sextet polynomials.
By distinguishing two cases whether the hexagon $h$ is an aromatic sextet,
we obtain
\begin{equation}\label{Lmn-rr}
L^{(m)}_n(x)=[(m-2)x+1]L^{(m)}_{n-1}(x)+xL^{(m)}_{n-2}(x)
\end{equation}
with $L^{(m)}_0(x)=1$ and $L^{(m)}_1(x)=(m-1)x+1$
(see also \cite{Gutetal}).
It follows that $L^{(m)}_n(x)$ have only real zeros from Corollary \ref{c-3rr}.
\begin{figure}[h]
  \centering
  \includegraphics[width=12.7cm]{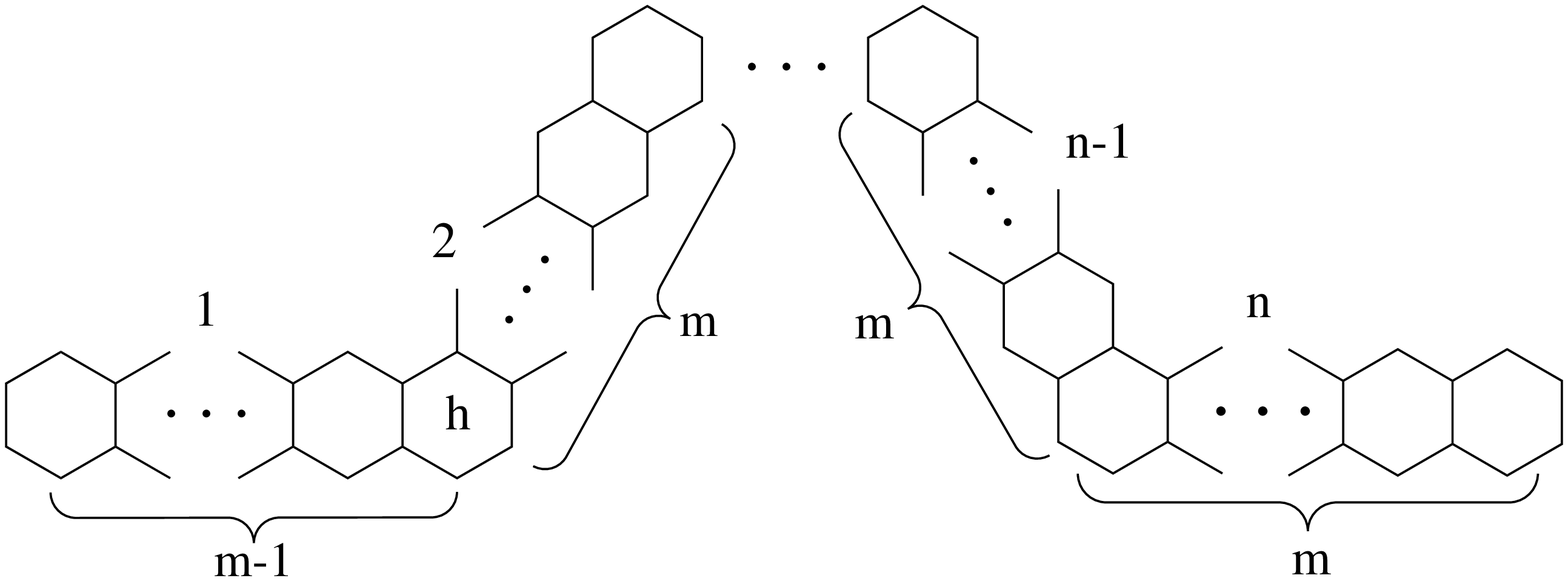}\\
  \caption{$L^{(m)}_n$}\label{Lmn}
\end{figure}
\end{exm}

We next discuss the location of zeros of all $L^{(m)}_n(x)$.
We need the following lemma.

Let $(f_n(x))_{n\ge 0}$ be a sequence of complex polynomials.
We say that the complex number $x$ is a {\it limit of zeros} of the sequence $(f_n(x))_{n\ge 0}$
if there is a sequence $(z_n)_{n\ge 0}$ such that $f_n(z_n)=0$ and $z_n\rightarrow x$ as $n\rightarrow +\infty$.
Suppose now that $(f_n(x))_{n\ge 0}$ is a sequence of polynomials satisfying the recursion 
$$f_{n+k}(x)=-\sum_{j=1}^kc_j(x)f_{n+k-j}(x)$$
where $c_j(x)$ are polynomials in $x$.
Let $\la_j(x)$ be all zeros of the associated characteristic equation
$\la^k+\sum_{j=1}^{k}c_j(x)\la^{k-j}=0$.
It is well known that if $\la_j(x)$ are distinct, then
\begin{equation}\label{r}
f_n(x)=\sum_{j=1}^k\alpha_j(x)\la_j^n(x),
\end{equation}
where $\alpha_j(x)$ are determined from the initial conditions.

\begin{lem}[{\cite[Theorem]{BKW78}}]\label{bkw}
Under the non-degeneracy requirements that in \eqref{r}
no $\alpha_j(x)$ is identically zero
and that for no pair $i\neq j$ is $\la_i(x)\equiv\omega\la_j(x)$ for some $\omega\in\mathbb{C}$ of unit modulus,
then $x$ is a limit of zeros of $(f_n(x))_{n\ge 0}$ if and only if either
\begin{enumerate}[\rm (i)]
  \item two or more of the $\la_i(x)$ are of equal modulus, and strictly greater (in modulus) than the others; or
  \item for some $j$, $\la_j(x)$ has modulus strictly greater than all the other $\la_i(x)$ have, and $\alpha_j(x)=0$.
\end{enumerate}
\end{lem}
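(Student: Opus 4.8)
The plan is to exploit the dominant-balance structure of the representation \eqref{r}, combining Hurwitz's theorem with an argument-principle (winding) estimate. Throughout I work locally: I fix a candidate point $x_0$ together with a small disc $D$ centred at $x_0$ on which all the $\lambda_j(x)$ and $\alpha_j(x)$ are analytic and the $\lambda_j$ stay distinct, and I order the roots at $x_0$ by decreasing modulus.

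First I would settle the ``only if'' direction by contraposition. Assume that neither (i) nor (ii) holds at $x_0$; then there is a single index, say $1$, with $|\lambda_1(x_0)|>|\lambda_j(x_0)|$ for all $j\ne 1$ and $\alpha_1(x_0)\ne 0$. Shrinking $D$, strict domination and non-vanishing persist on $D$, and $\lambda_1$ is bounded away from $0$. Writing
\[
f_n(x)=\alpha_1(x)\lambda_1^{\,n}(x)\Bigl[\,1+\sum_{j\ne 1}\frac{\alpha_j(x)}{\alpha_1(x)}\Bigl(\frac{\lambda_j(x)}{\lambda_1(x)}\Bigr)^{\!n}\,\Bigr],
\]
the bracketed factor tends to $1$ uniformly on $D$, since each $|\lambda_j/\lambda_1|<1$ there. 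Hence $f_n$ is zero-free on $D$ for all large $n$, so $x_0$ is not a limit of zeros.

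For the ``if'' direction I would split into the two cases. In case (ii) there is a unique dominant root $\lambda_1$ (so $\lambda_1(x_0)\ne 0$) but $\alpha_1(x_0)=0$. Dividing by $\lambda_1^{\,n}$, the analytic functions $g_n(x)=f_n(x)/\lambda_1^{\,n}(x)=\alpha_1(x)+\sum_{j\ne 1}\alpha_j(x)(\lambda_j/\lambda_1)^n$ converge uniformly on $D$ to $\alpha_1$, which is not identically zero by the non-degeneracy hypothesis yet vanishes at $x_0$. Hurwitz's theorem then produces zeros of $g_n$, equivalently of $f_n$, converging to $x_0$.

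Case (i) is the crux and the step I expect to be hardest. Here roots $\lambda_1,\dots,\lambda_m$ with $m\ge 2$ share the maximal modulus at $x_0$, strictly exceeding the remaining ones. I would show that the dominant balance $\sum_{i=1}^m\alpha_i(x)\lambda_i^{\,n}(x)=0$, up to errors exponentially smaller than $|\lambda_1|^n$, acquires a solution near $x_0$ for large $n$. For intuition take $m=2$: a zero requires $w(x)^n=-\alpha_2(x)/\alpha_1(x)$ with $w=\lambda_1/\lambda_2$ and $|w(x_0)|=1$. The non-degeneracy condition forbids $w$ from being a unit-modulus constant; as $|w(x_0)|=1$ this already rules out $w$ being constant, so by the open mapping theorem $|w|$ assumes values both below and above $1$ near $x_0$. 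I would locate solutions on the level set $|w(x)|=|\alpha_2/\alpha_1|^{1/n}$, a curve collapsing onto $\{|w|=1\}$ as $n\to\infty$; along it $\arg(w^n)=n\arg w$ sweeps through a full period over an arc of length $O(1/n)$, so the intermediate value theorem matches the required phase and yields a zero $x_n\to x_0$. Turning this into a proof---uniform control of the subdominant remainder, the passage from $m=2$ to general $m$ (where one instead counts, via the argument principle, the winding of $f_n/\lambda_1^{\,n}$ about $0$ on a small circle), and the degenerate subcase $w'(x_0)=0$---is precisely where the real work concentrates.
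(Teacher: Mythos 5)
The paper does not prove this lemma at all: it is quoted as the main theorem of Beraha, Kahane and Weiss \cite{BKW78}, so there is no in-paper argument to compare yours against. Judged on its own terms, your sketch correctly reproduces the two easy parts of the standard argument: the ``only if'' direction (a single strictly dominant $\la_1$ with $\alpha_1(x_0)\neq 0$ forces $f_n$ to be zero-free on a neighbourhood of $x_0$ for all large $n$) and case (ii) of the ``if'' direction (divide by $\la_1^{\,n}$ and apply Hurwitz to $g_n\to\alpha_1$) are sound, modulo the standing caveat that you work only at points where the $\la_j$ are analytic and distinct; the finitely many points where roots of the characteristic equation collide are excluded by fiat and would need a separate treatment.

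The genuine gap is case (i), which is the substance of the theorem, and your own text concedes it is not carried out. Concretely: (a) the phase-matching argument for $m=2$ presupposes that $-\alpha_2(x)/\alpha_1(x)$ is finite, nonzero and continuous near $x_0$, but case (i) places no such hypothesis on the $\alpha_i$ at the point $x_0$ (only that they are not identically zero), so the level set $|w(x)|=|\alpha_2/\alpha_1|^{1/n}$ may be empty or degenerate where you need it; (b) the claim that $n\arg w$ sweeps a full period over an arc of length $O(1/n)$ of that level curve needs $w'(x_0)\neq 0$, or a quantitative substitute at a critical point, and you flag but do not resolve this subcase; (c) for $m\ge 3$ you only name the tool (counting the winding of $f_n/\la_1^{\,n}$ about $0$ on a small circle) without showing the winding number is eventually positive, which is precisely where the real work of Beraha--Kahane--Weiss lies; (d) the uniform absorption of the subdominant terms into the dominant balance (a Rouch\'e-type step) is listed as remaining to be done rather than done. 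As written, the proposal establishes the ``only if'' direction and case (ii) but not case (i), and hence does not prove the lemma; since the paper itself (reasonably) treats this as a citation rather than something to reprove, the cleanest fix is to do the same.
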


\begin{exm}\label{P-d}
Recall that the sextet polynomials of the pyrene chains
$$P_n(x)=\frac{\la_1^{n+1}-\la_2^{n+1}}{\la_1-\la_2},\quad
\la_{1,2}=\frac{x^2+4x+1\pm (x+1)\sqrt{x^2+6x+1}}{2}.$$
We apply Lemma \ref{bkw} to show that
each $x\in\left[-(\sqrt{2}+1)^2,-(\sqrt{2}-1)^2\right]$ is a limit of zeros of the sequence $(P_n(x))_{n\ge 0}$.

The non-degeneracy conditions of Lemma \ref{bkw} are clearly satisfied.
So the limits of zeros of $(P_n(x))_{n\ge 0}$ are those $x$ for which $|\la_1|=|\la_2|$,
i.e.,
$$|x^2+4x+1+(x+1)\sqrt{x^2+6x+1}|=|x^2+4x+1-(x+1)\sqrt{x^2+6x+1}|.$$
In other words, $\sqrt{x^2+6x+1}$ must be a pure imaginary or zero.
Thus $x^2+6x+1\le 0$, i.e., $$-3-2\sqrt{2}\le x\le -3+2\sqrt{2},$$
which is what we wanted to show.

As a result,
zeros of all $P_n(x)$ are dense in 
$\left[-(\sqrt{2}+1)^2,-(\sqrt{2}-1)^2\right]$.
\end{exm}

\begin{thm}\label{sp-zd}
Real sextet zeros of hexagonal systems are dense in the interval $(-\infty,0]$.
\end{thm}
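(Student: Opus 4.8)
The plan is to realize every point of $(-\infty,0)$ as a limit of zeros of one of the real-rooted families already at hand, and then invoke the Beraha--Kahane--Weiss criterion (Lemma \ref{bkw}) family by family. The natural vehicle is the chains $L^{(m)}_n$, whose sextet polynomials obey \eqref{Lmn-rr} for every $m\ge2$. For fixed $m$ the characteristic equation is $\la^2-[(m-2)x+1]\la-x=0$, with discriminant
$$\Delta_m(x)=[(m-2)x+1]^2+4x=(m-2)^2x^2+2mx+1.$$
Exactly as in Example \ref{P-d}, the non-degeneracy hypotheses of Lemma \ref{bkw} hold (the two roots are not unimodular multiples of one another, and neither coefficient $\alpha_j(x)$ in \eqref{r} vanishes identically), so a real number $x$ is a limit of zeros of $(L^{(m)}_n(x))_{n\ge0}$ precisely when the two roots have equal modulus, i.e.\ when they are complex conjugate, i.e.\ when $\Delta_m(x)\le0$. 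Since every $L^{(m)}_n(x)$ is real-rooted by Corollary \ref{c-3rr}, each such $x$ is a genuine limit of real sextet zeros. Writing $S_m=\{x\in\mathbb{R}:\Delta_m(x)\le0\}$, it therefore suffices to prove $\bigcup_{m\ge2}S_m\supseteq(-\infty,0)$.

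First I would dispose of the unbounded part. For $m=2$ the discriminant degenerates to $\Delta_2(x)=1+4x$, whence $S_2=(-\infty,-\tfrac14]$; this single family already forces the sextet zeros to be dense on the entire ray below $-\tfrac14$. It then remains only to cover the short interval $(-\tfrac14,0)$, and this is where I expect the real work to lie.

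The key idea for $(-\tfrac14,0)$ is to reverse the roles of $x$ and $m$: for fixed $x$ I would regard $\Delta_m(x)$ as a quadratic in $m$,
$$\Delta_m(x)=x^2m^2+2x(1-2x)m+(4x^2+1),$$
whose discriminant in $m$ equals $-16x^3>0$ for $x<0$. Hence $\{m\in\mathbb{R}:\Delta_m(x)\le0\}$ is a genuine interval $[m_-(x),m_+(x)]$, and a short computation gives its length as $m_+(x)-m_-(x)=4/\sqrt{-x}$ together with $m_-(x)=\bigl(1-2\sqrt{-x}+2(-x)\bigr)/(-x)$. For $x\in(-\tfrac14,0)$ one has $\sqrt{-x}<\tfrac12$, so the length exceeds $8$ while $m_-(x)>2$ (the latter reducing to $1-2\sqrt{-x}>0$). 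An interval of length greater than $1$ whose left endpoint exceeds $2$ must contain an integer $m\ge3$; for that admissible $m$ we have $\Delta_m(x)\le0$, i.e.\ $x\in S_m$. Thus every $x\in(-\tfrac14,0)$ is a limit of sextet zeros.

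Combining the two ranges yields $\bigcup_{m\ge2}S_m\supseteq(-\infty,0)$, so real sextet zeros are dense in $(-\infty,0)$; and since the line chains give $L_n(x)=nx+1$ with zero $-1/n\to0$, the sextet zeros accumulate at $0$ as well, giving density in the closed ray $(-\infty,0]$. The main obstacle is the covering of $(-\tfrac14,0)$: the families $L^{(m)}_n$ with large $m$ each contribute only a minuscule interval $S_m$ hugging $0$, and the genuine content is that these minuscule intervals leave no gaps. The quadratic-in-$m$ device above is precisely what renders this transparent, converting ``no gaps'' into the elementary length estimate $4/\sqrt{-x}>1$.
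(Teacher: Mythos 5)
Your proposal is correct and follows essentially the same route as the paper: the same families $L^{(m)}_n$ with recurrence \eqref{Lmn-rr}, the same application of Lemma \ref{bkw}, the same split into the $m=2$ chain covering $(-\infty,-\tfrac14]$ and the $m\ge3$ chains covering the rest. The only difference is bookkeeping in the covering step: where the paper computes the endpoints $a_m,b_m$ of each interval $\{x:\Delta_m(x)\le 0\}$ and checks that consecutive intervals overlap ($a_{m+1}<b_m$, $b_m\to 0$), you fix $x$ and note that $\{m:\Delta_m(x)\le 0\}$ is an interval of length $4/\sqrt{-x}>1$ lying to the right of $2$, hence contains an integer $m\ge 3$ --- an equivalent and equally valid device.
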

\begin{proof}
It suffices to show that real sextet zeros of hexagonal systems $L^{(m)}_n$ for $m\ge 2$
are dense in the interval $(-\infty,0]$.

For $m=2$, we have
\begin{equation}\label{I2}
L^{(2)}_n(x)=L^{(2)}_{n-1}(x)+xL^{(2)}_{n-2}(x)
\end{equation}
with $L^{(2)}_0(x)=1$ and $L^{(2)}_1(x)=1+x$.
Solve \eqref{I2} to obtain the Binet form
\begin{equation}\label{I2-bf}
L^{(2)}_n(x)=
\frac{\left(1+\sqrt{1+4x}\right)^{n+2}-\left(1-\sqrt{1+4x}\right)^{n+2}}{2^{n+2}\sqrt{1+4x}}.
\end{equation}
It follows from Lemma \ref{bkw} that
$x$ is a limits of zeros of $(L^{(2)}_n(x))_{n\ge 0}$ if and only if
$$|1+\sqrt{1+4x}|=|1-\sqrt{1+4x}|,$$
i.e., $1+4x\le 0$, or equivalently, $x\in (-\infty,-1/4]$.
In particular,
zeros of $L^{(2)}_n(x)$ are dense in the interval $(-\infty,-1/4]$.

For $m\ge 3$,
solve \eqref{Lmn-rr} to obtain the Binet form
$$L^{(m)}_n(x)=c_1(x)\la_1^n(x)+c_2(x)\la_2^n(x),$$
where
$$
\left\{
  \begin{array}{l}
    c_1(x)=\dfrac{\sqrt{(m-2)^2x^2+2mx+1}+mx+1}{2\sqrt{(m-2)^2x^2+2mx+1}} \\
    c_2(x)=\dfrac{\sqrt{(m-2)^2x^2+2mx+1}-mx-1}{2\sqrt{(m-2)^2x^2+2mx+1}}
  \end{array}
\right.
$$
and
$$
\left\{
  \begin{array}{l}
    \la_1(x)=\dfrac{(m-2)x+1+\sqrt{(m-2)^2x^2+2mx+1}}{2} \\
    \la_2(x)=\dfrac{(m-2)x+1-\sqrt{(m-2)^2x^2+2mx+1}}{2}.
  \end{array}
\right.
$$
Similarly, $x$ is a limit of zeros of $L^{(m)}_n(x)$ if and only if
$(m-2)^2x^2+2mx+1\le 0$, i.e., $x\in I_m=[a_m,b_m]$, where
\begin{equation}\label{Im}
a_m=-\frac{1}{(\sqrt{m-1}-1)^2},\quad
b_m=-\frac{1}{(\sqrt{m-1}+1)^2}.
\end{equation}
Clearly,
$(a_m), (b_m)$ are increasing sequences
and $a_{m+1}<b_{m}$ for $m\ge 3$.
Hence $I_m\cap I_{m+1}\neq\emptyset$.
Also, $a_3=-(\sqrt{2}+1)^2$ and $b_m\rightarrow 0$ when $m\rightarrow +\infty$.
Thus zeros of $L^{(m)}_n(x)$ for $m\ge 3$ are dense in the interval $[-(\sqrt{2}+1)^2,0]$.

Since $-(\sqrt{2}+1)^2<-1/4$,
we conclude that zeros of $L^{(m)}_n(x)$ for $m\ge 2$ are dense in $(-\infty,0]$,
as desired.
\end{proof}

Note that $L^{(m)}_n$ are non-branched cata-condensed hexagonal systems,
which imply that $L^{(m)}_n(x)$ are the matching polynomials
of the corresponding Gutman trees.
Hence the following folklore result is immediate.

\begin{prop}
Zeros of the matching polynomials of graphs are dense in 
$(-\infty,0]$.
\end{prop}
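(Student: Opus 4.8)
The plan is to read the desired density statement as a direct corollary of Theorem~\ref{sp-zd} via Gutman's identification of sextet polynomials with matching polynomials. Concretely, I would invoke Gutman's theorem \cite{Gut77}: the sextet polynomial of a non-branched cata-condensed hexagonal system is exactly the matching polynomial of the associated Gutman tree. Since each $L^{(m)}_n$ with $m\ge 2$ is a non-branched cata-condensed hexagonal system, the sextet polynomial $L^{(m)}_n(x)$ coincides with the matching polynomial of some tree $T^{(m)}_n$, so every zero of $L^{(m)}_n(x)$ is a zero of a matching polynomial of a graph.

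First I would recall from the proof of Theorem~\ref{sp-zd} that, as $m$ and $n$ range over $m\ge 2$ and $n\ge 0$, the real zeros of the family $\{L^{(m)}_n(x)\}$ are dense in $(-\infty,0]$: the case $m=2$ contributes limits of zeros filling $(-\infty,-1/4]$, while the cases $m\ge 3$ contribute limits of zeros covering $[-(\sqrt{2}+1)^2,0]$, and the inequality $-(\sqrt{2}+1)^2<-1/4$ closes the gap between the two ranges. Every point of $(-\infty,0]$ is therefore a limit of zeros of this family, hence arbitrarily well approximated by genuine zeros of individual polynomials $L^{(m)}_n(x)$.

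Combining the two facts completes the argument: each such approximating zero is, by the first step, a zero of the matching polynomial of the tree $T^{(m)}_n$. Thus the set of all zeros of matching polynomials of graphs contains a subset dense in $(-\infty,0]$, which is precisely the assertion of the proposition.

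The step requiring the most care is not any new estimate but the bookkeeping of conventions and hypotheses. I would verify that every $L^{(m)}_n$ in the family is genuinely non-branched and cata-condensed, so that Gutman's theorem applies uniformly across the whole family rather than only to isolated members; and I would make explicit which normalization of the matching polynomial is intended---namely the matching generating polynomial $\sum_k m_k(G)x^k$, whose zeros are real and nonpositive---so that the phrase ``dense in $(-\infty,0]$'' refers to zeros in the correct form. Granting these routine checks, no genuine obstacle remains, which is exactly why the result is fairly described as folklore.
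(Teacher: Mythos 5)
Your proposal matches the paper's argument exactly: the paper derives the proposition by noting that the $L^{(m)}_n$ are non-branched cata-condensed, so by Gutman's identification their sextet polynomials are matching polynomials of the corresponding Gutman trees, and then the density follows from Theorem~\ref{sp-zd}. Your additional checks on conventions are sensible but the route is the same.
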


\section{Further work}

The sextet polynomial of a hexagonal system may have non-real zeros.
For example, the sextet polynomial of the triphenylene $T$
is $\sg(T,x)=x^3+3x^2+4x+1$,
whose three zeros are
$$x_1\approx -0.32,\quad x_{2,3}\approx -1.34\pm 1.16i.$$

By checking all hexagonal systems with at most $5$ hexagons,
we found that every sextet polynomial has a real sextet zero in the interval $[-1,0)$ and
all zeros in the left-half plane.
It is possible that all sextet polynomials have such properties.
Gutman \cite{Gut83} conjectured that every sextet polynomial has unimodal coefficients.
We propose the following stronger conjecture.

\begin{conj}\label{sp-lc}
The sextet polynomial of a hexagonal system has log-concave coefficients.
\end{conj}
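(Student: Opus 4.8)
The plan is to split the problem according to how far the sextet polynomial is from being real-rooted, since log-concavity would be automatic from the Newton Inequality wherever real-rootedness holds. First I would record the two classical reductions already cited in the introduction: by Gutman \cite{Gut77} the sextet polynomial of a non-branched cata-condensed system is the matching polynomial of its Gutman tree, and by Gutman \cite{Gut82} the sextet polynomial of a resonant system is the independence polynomial of its Clar graph. Matching polynomials are always real-rooted \cite{GG81}, so that entire family is log-concave with nothing further to prove. More generally, every family in \S 3 whose sextet polynomials obey a three-term recurrence $f_n=a(x)f_{n-1}+b(x)f_{n-2}$ with $b(x)\le 0$ for $x\le 0$ is real-rooted by Corollary \ref{c-3rr}, hence log-concave. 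So the first concrete step is to push Corollary \ref{c-3rr} and the interlacing machinery of Lemma \ref{lem-lw} as far as possible, capturing all the cata-condensed chains and as many thin systems as the recurrences allow.

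The substance of the conjecture lies in the systems whose sextet polynomials have non-real zeros --- the triphenylene $T$ with $\sg(T,x)=x^3+3x^2+4x+1$ is the smallest example --- because there the Newton Inequality is unavailable and log-concavity must be proved directly. My main line of attack would be induction on the number of hexagons through a deletion recurrence for $\sg(H,x)$ obtained by fixing a hexagon $h$ and separating resonant patterns according to whether $h$ is an aromatic sextet, exactly as in the derivation of \eqref{Lmn-rr}; this yields a relation of the form $\sg(H,x)=\sg(H_1,x)+x\,\sg(H_2,x)$ for appropriate smaller systems $H_1,H_2$. The obstacle is that log-concavity is preserved under neither addition of polynomials nor the shift induced by the factor $x$, so a naive induction collapses immediately. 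The remedy I would pursue is to strengthen the inductive hypothesis to a synchronized statement --- for instance that $\sg(H_1,x)$ and $x\,\sg(H_2,x)$ are log-concave and suitably \emph{interlaced} in their consecutive coefficient ratios, a condition that is closed under the relevant addition --- and then to verify that the local geometry of a hexagonal system constrains the two summands to satisfy it.

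Alternatively, I would exploit the graph-theoretic realization more aggressively in the resonant case: since there $\sg(H,x)$ is an independence polynomial and independence polynomials of clawfree graphs are real-rooted by Chudnovsky and Seymour \cite{CS07}, it would suffice to show that Clar graphs are clawfree, or at least to quantify and control their induced claws. I expect clawfreeness to fail for branched systems precisely where real-rootedness fails, so the realistic goal is to isolate the claws of a Clar graph combinatorially and prove that each contributes only a bounded, log-concavity-preserving perturbation. The hard part, in every version of the argument, will be the same: finding a structural invariant of hexagonal systems strong enough to force $s(H,k)^2\ge s(H,k-1)\,s(H,k+1)$ in the presence of complex zeros, yet robust enough to survive the induction. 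A multivariate lift of $\sg(H,x)$ in the spirit of Lorentzian polynomials is the most promising candidate for such an invariant, and constructing one adapted to the hexagonal structure is, I believe, where the real difficulty of Conjecture \ref{sp-lc} resides.
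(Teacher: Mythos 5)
This statement is posed in the paper as Conjecture~\ref{sp-lc} and is \emph{not proved} there; it is offered as an open strengthening of Gutman's unimodality conjecture, supported only by the real-rooted families of \S 2--3 and by numerical checks on small systems. Your proposal, read as a proof, therefore has a genuine gap --- indeed you concede as much in your own closing sentence. Concretely: (1) the deletion recurrence $\sg(H,x)=\sg(H_1,x)+x\,\sg(H_2,x)$ that you want to induct on is derived in the paper only for the specific chains $L^{(m)}_n$ (equation~\eqref{Lmn-rr}); for a general, in particular peri-condensed, hexagonal system the "smaller systems" $H_1,H_2$ obtained by conditioning on one hexagon need not themselves be hexagonal systems, and no such clean two-term recurrence is established. (2) The "synchronized" inductive hypothesis on consecutive coefficient ratios is named but never formulated, and it is exactly the missing ingredient: log-concavity is not closed under the operation $f\mapsto f_1+x f_2$, and you give no structural property of hexagonal systems that would force the required interlacing of ratios. (3) The clawfree route via Chudnovsky--Seymour fails at the very first interesting example: the Clar graph of the triphenylene $T$ is precisely the claw $K_{1,3}$ (its independence polynomial $1+4x+3x^2+x^3$ is $\sg(T,x)$ read in reverse), and $\sg(T,x)$ has non-real zeros, so there is no hope of recovering real-rootedness for branched systems, and "bounded log-concavity-preserving perturbation" is not a defined or proved notion.

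What your proposal does correctly is reproduce the paper's own evidence base: real-rootedness (hence Newton log-concavity) for matching-polynomial realizations \cite{Gut77,GG81}, for independence-polynomial realizations of resonant systems when the Clar graph happens to be clawfree \cite{Gut82,CS07}, and for the recurrence families covered by Corollary~\ref{c-3rr} and Lemma~\ref{lem-lw}. That is precisely the territory the paper already secures, and the conjecture's content lies entirely in the complement --- systems such as $T$ whose sextet polynomials have complex zeros --- where neither the paper nor your proposal supplies an argument. The Lorentzian/multivariate-lift suggestion is a reasonable research direction, but as written it is a pointer to a hoped-for invariant, not a construction; the conjecture remains open.
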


In 1996, H. Zhang and F. Zhang \cite{ZZ96a} introduced the concept of
Clar covering polynomial $\chi(H,x)$ of a hexagonal system $H$:
$$\chi(H,x)=\sum_{k=0}^{C(H)}c(H,k)x^k,$$
where $c(H,k)$ denote the number of Clar covers of $H$ having precisely $k$ hexagons.
They \cite{ZZ00} showed that the coefficients of the first half terms with higher degrees
in a Clar covering polynomial form a strictly decreasing sequence,
and further proposed the following unimodal conjecture.

\begin{conj}[{\cite[Conjecture 8]{ZZ00}}]\label{ccp-u}
The Clar covering polynomial of a hexagonal system has unimodal coefficients.
\end{conj}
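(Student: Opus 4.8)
The plan is to split the problem into a real-rooted regime, where unimodality is automatic, and a genuinely combinatorial regime, where it is not. The cleanest sufficient condition is real-rootedness: if $\chi(H,x)$ has only real zeros then, its coefficients being nonnegative, the Newton inequality forces them to be log-concave and hence unimodal, exactly as for $P_n(x)$. So first I would delimit the class of hexagonal systems for which $\chi(H,x)$ is real-rooted. For chain-like systems the Clar covering polynomial satisfies a linear deletion recurrence, for instance a three-term recurrence $\chi_n(x)=a(x)\chi_{n-1}(x)+b(x)\chi_{n-2}(x)$ analogous to \eqref{Lmn-rr}, obtained by a case analysis on a terminal hexagon (sextet or covered by its edges); one then computes $a,b$ explicitly and verifies $b(x)\le 0$ for $x\le 0$, so that Corollary \ref{c-3rr} yields real-rootedness, log-concavity and interlacing along each such family. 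This would settle the conjecture for non-branched catacondensed chains and related strips in one stroke.

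For general hexagonal systems real-rootedness is unavailable: already the simpler sextet polynomial of the triphenylene $T$ has the complex zeros $-1.34\pm 1.16i$, and a direct check on small peri-condensed systems shows that $\chi(H,x)$ too generically fails to be real-rooted. Hence for the full conjecture I would pass to an inductive combinatorial argument on the number of hexagons, built on the fundamental reduction of Zhang and Zhang, which expands $\chi(H,x)$ as a nonnegative combination of Clar covering polynomials of subsystems obtained by deleting a hexagon together with its incident structure. The key leverage is their theorem in \cite{ZZ00} that the upper half of the coefficient sequence—the terms of highest degree—is already strictly decreasing; it therefore suffices to prove that the lower half $1=c(H,0)\le c(H,1)\le\cdots$ increases up to the global mode. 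I would try to show that the reduction preserves a strengthened invariant, namely log-concavity together with an interlacing-type control on the position of the mode, in the spirit of Lemma \ref{lem-lw}, so that the modes of the smaller summands align and unimodality is closed under the recurrence.

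The main obstacle is precisely this alignment. Unlike the pyrene case, $\chi(H,x)$ is not symmetric, so one cannot deduce the behaviour of the lower half from the decreasing upper half by a reflection; and the reduction writes $\chi(H,x)$ as a sum of several smaller Clar covering polynomials whose peaks need not sit in the same place, so neither unimodality nor log-concavity is automatically inherited. To force the peaks together I would aim for the stronger conclusion that the coefficient arrays are strongly log-concave (Lorentzian), a property stable under the nonnegative operations appearing in the reduction, or else construct, uniformly over all hexagonal systems, explicit injections from the Clar covers with $k$ hexagons into those with $k+1$ hexagons below the mode and the reverse injections above it. Making such injections insensitive to the peri-condensed geometry, so that the same argument absorbs coronene-type subpatterns, is what I expect to be the crux of the whole problem; a success here would simultaneously yield the log-concavity Conjecture \ref{sp-lc} for the sextet polynomial.
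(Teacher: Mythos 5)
This statement is not a theorem of the paper at all: it is Conjecture~\ref{ccp-u}, quoted from Zhang and Zhang \cite{ZZ00}, and the paper offers no proof of it (indeed the authors go on to propose the even stronger log-concavity Conjectures~\ref{ccp-lc} and~\ref{conj-p}). So there is no proof in the paper to compare yours against, and what you have written is a research programme rather than a proof. As an argument it has a genuine and decisive gap, which you yourself flag: everything hinges on the ``mode alignment'' step in the inductive reduction, and none of the three proposed ways to achieve it (a strengthened log-concave-plus-mode-location invariant in the spirit of Lemma~\ref{lem-lw}, a Lorentzian/strongly log-concave property stable under the reduction, or explicit injections between Clar covers with $k$ and $k+1$ hexagons) is carried out or even shown to be plausible beyond analogy. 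Unimodality is not preserved under addition of unimodal sequences with different modes, and the Zhang--Zhang expansion of $\chi(H,x)$ into subsystem polynomials gives you exactly such a sum, so until the alignment is established the induction simply does not close.

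Two smaller points. First, the real-rooted regime you delimit is genuinely useful but only reproves special cases already within reach of Corollary~\ref{c-3rr}; it cannot touch the peri-condensed systems where the conjecture has content. Second, your final claim that success would ``simultaneously yield'' Conjecture~\ref{sp-lc} has the logic backwards relative to what the paper records: the implication established there runs from log-concavity of $\phi(H,x)$ (Conjecture~\ref{conj-p}) to log-concavity of $\chi(H,x)=\phi(H,x+1)$ via Hoggar's theorem \cite{Hog74}, not from a unimodality statement about $\chi$ to log-concavity of the sextet polynomial. What you have is a reasonable survey of obstacles and possible tools, but the conjecture remains open and your text should not be presented as a proof of it.
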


Numerical results suggest the following log-concave conjecture.

\begin{conj}\label{ccp-lc}
The Clar covering polynomial of a hexagonal system has log-concave coefficients.
\end{conj}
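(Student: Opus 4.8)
The plan is structural induction on the number of hexagons, because the real-rootedness route is unavailable: Conjecture \ref{ccp-lc} cannot be reduced to the Newton Inequality, since Clar covering polynomials are not expected to have only real zeros --- already the closely related sextet polynomial $\sg(T,x)=x^3+3x^2+4x+1$ of the triphenylene $T$ has a conjugate pair of non-real zeros. First I would assemble the Zhang--Zhang deletion recurrences for $\chi(H,x)$, which express the Clar covering polynomial of $H$ as a combination, with coefficients among $\{1,x\}$, of the Clar covering polynomials of hexagonal systems having strictly fewer hexagons --- schematically $\chi(H,x)=\chi(H_1,x)+x\,\chi(H_2,x)+\cdots$, obtained by fixing a perimeter edge or hexagon and splitting Clar covers according to whether a chosen hexagon is an aromatic sextet. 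This is the exact analogue of the independence/sextet recurrence \eqref{d-rr} used in \S 2, and it reduces the conjecture to controlling how log-concavity propagates along these splittings.

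The difficulty, and the reason this remains a conjecture, is that log-concavity is \emph{not} preserved under addition of sequences: even if every $\chi(H_i,x)$ on the right is log-concave, their sum need not be. So a naive induction fails, and the real content is to find a strengthened invariant that (a) implies log-concavity and (b) \emph{is} closed under the two operations appearing in the recurrences, namely $f\mapsto f+g$ for recurrence-adjacent pairs and $f\mapsto xf$. The operation $f\mapsto xf$ is harmless (it shifts coefficients and preserves log-concavity for polynomials with nonzero constant term), so the crux is the additive step. I would attempt to prove a \emph{synchronized} log-concavity: show that the recurrence-adjacent polynomials $\chi(H_1,x),\chi(H_2,x),\dots$ not only are individually log-concave but have compatibly interlacing coefficient ratios (a ``coefficientwise interlacing,'' in the spirit of the polynomial interlacing $g\sp f$ and Lemma \ref{lem-lw}, but applied to coefficient sequences rather than zeros), a property under which sums do stay log-concave. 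One then carries this stronger statement through the induction.

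An alternative I would run in parallel is the Lorentzian/multivariate route: homogenize $\chi(H,x)$ and ask whether a natural multivariate refinement $\chi(H;\mathbf{x})$, with one variable per hexagon, is Lorentzian (equivalently, whether its support is M-convex and its Hessians have the right signature); Lorentzianity would immediately yield log-concavity of the univariate specialization. The attraction is that Lorentzian polynomials \emph{are} closed under the relevant operations, so the additive obstruction above might dissolve. The expected main obstacle is precisely establishing either the synchronized-interlacing invariant or the M-convexity of the support: the family of coexistent aromatic sextets has no obvious matroid structure, and the Zhang--Zhang recurrences mix systems of different shapes, so verifying that the strengthened hypothesis survives every reduction --- especially when a peri-condensed piece such as the coronene $C$ appears --- is where the argument is likely to break down. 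A sensible first milestone is to settle the conjecture for thin hexagonal systems, where the recurrences are three-term as in Corollary \ref{c-3rr}, before attempting the general peri-condensed case.
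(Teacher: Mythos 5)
The first thing to say is that the statement you were asked to prove is Conjecture \ref{ccp-lc}: the paper offers no proof of it, and your submission, as you yourself make clear, is a research plan rather than a proof. So there is a gap by definition --- no argument is actually carried out --- but it is the same gap the paper leaves open, and your diagnosis of why the naive approaches fail is sound: real-rootedness is unavailable (the triphenylene example already rules out a Newton-inequality shortcut for the closely related sextet polynomial), and log-concavity is not closed under the additions occurring in deletion recurrences, so an unreinforced induction collapses at the additive step exactly as you say.

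What you missed is the one concrete piece of leverage the paper does record for this conjecture. By Zhang and Zhang \cite[Theorem 2]{ZZ00}, $\chi(H,x)=\phi(H,x+1)$ for every Kekul\'ean hexagonal system $H$, where $\phi(H,x)=\sum_k p(H,k)x^k$ counts perfect matchings by their number of proper sextets; and by Hoggar \cite[Theorem 2]{Hog74}, if a polynomial with positive coefficients is log-concave then so is $f(x+1)$. Hence Conjecture \ref{conj-p} (log-concavity of $\phi(H,x)$) implies Conjecture \ref{ccp-lc}. This is precisely the kind of ``strengthened invariant'' your second paragraph is searching for: rather than synchronizing log-concavity across the summands of a recurrence for $\chi$, one transfers the whole problem to $\phi$, whose coefficients are a finer statistic and for which the shift $x\mapsto x+1$ --- the operation that actually produces $\chi$ --- is harmless for log-concavity. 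Your proposed first milestone also simplifies under this reduction: for thin hexagonal systems $\phi(H,x)=\sg(H,x)$ \cite[Theorem 11]{ZZ00}, so the thin case of Conjecture \ref{ccp-lc} would already follow from the sextet log-concavity Conjecture \ref{sp-lc}. None of this settles the conjecture, and your synchronized-interlacing and Lorentzian routes remain legitimate directions, but any serious attack should start from the $\phi$-side reduction rather than from recurrences for $\chi$ itself.
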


Let $p(H,k)$ denote the number of perfect matchings of a hexagonal system $H$
which contains precisely $k$ proper sextets.
Define the polynomial
$$\phi(H,x)=\sum_{k=0}^{C(H)}p(H,k)x^k.$$
Zhang and Zhang \cite[Theorem 11]{ZZ00}
showed that $\phi(H,x)=\sg(H,x)$ if 
$H$ is a thin hexagonal system.
We propose the following conjecture.

\begin{conj}\label{conj-p}
Let $H$ be a Kekul\'ean hexagonal system.
Then the polynomial $\phi(H,x)$ has log-concave coefficients.
\end{conj}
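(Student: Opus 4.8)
The plan is to attack Conjecture~\ref{conj-p} on two fronts, because the most direct route --- proving that $\phi(H,x)$ has only real zeros and then invoking the Newton inequality, as we did throughout \S2 --- cannot succeed in general. Indeed, for a thin hexagonal system $H$ one has $\phi(H,x)=\sg(H,x)$ by \cite[Theorem 11]{ZZ00}, and the triphenylene $T$ is thin, yet its sextet polynomial $\sg(T,x)=x^3+3x^2+4x+1$ has the non-real zeros $-1.34\pm 1.16i$. Hence $\phi(T,x)$ already fails to be real-rooted, the Newton inequality is unavailable, and log-concavity of the coefficients $p(H,k)$ must be established directly.

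First I would dispatch the cases where real-rootedness does hold, to serve as the base of an induction. Every non-branched cata-condensed system is thin, so there $\phi(H,x)=\sg(H,x)$ is the matching polynomial of the associated Gutman tree \cite{Gut77} and hence has only real zeros \cite{GG81}; the Newton inequality then yields log-concavity. The same applies to the thin chain families of \S2--\S3, whose sextet polynomials we have already shown to be real-rooted (Theorem~\ref{d-rz}, Corollary~\ref{c-3rr}), and, more generally, to any thin resonant system whose Clar graph is clawfree \cite{Gut82,CS07}. The genuine difficulty is thus concentrated in branched and in non-thin (peri-condensed) systems, where $\phi$ need neither be real-rooted nor even coincide with $\sg$.

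The main line of attack is a structural induction on the number of hexagons via a deletion recurrence for $\phi(H,x)$. Peeling off a peripheral hexagon $h$ and distinguishing whether $h$ is a proper sextet --- exactly the bookkeeping that produced \eqref{3rr} and \eqref{Lmn-rr} --- should yield an identity of the shape
\[
\phi(H,x)=a(x)\,\phi(H',x)+\sum_{j} b_j(x)\,\phi(H_j,x),
\]
where $H',H_j$ have fewer hexagons and $a,b_j$ are explicit polynomials with nonnegative coefficients. The obstacle is that log-concavity, unlike real-rootedness, is \emph{not} preserved under the sums appearing here. To push the induction through I would carry a stronger invariant stable under hexagonal gluing: either a total-positivity (P\'olya-frequency) property of the transfer matrices governing the recurrence, or a coupled ``log-concavity plus interlacing'' hypothesis in the spirit of Lemma~\ref{lem-lw} and Corollary~\ref{c-3rr}, strong enough that the weighted sum of the constituents stays log-concave. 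A complementary, purely bijective route is to construct, for each $k$, an injection from the pairs counted by $p(H,k-1)\,p(H,k+1)$ into those counted by $p(H,k)^2$, using local sextet rotations that exchange a rotatable hexagon between two Kekul\'e structures.

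The hard part will be the branching. At a hexagon with three neighbours the deletion recurrence genuinely becomes a sum of several log-concave polynomials, and the defining condition of a proper sextet is a global feature of a Kekul\'e structure that does not decouple cleanly under local deletion; this is exactly where triphenylene loses real-rootedness. Isolating the correct strengthening of log-concavity that survives these branched gluings --- and, relatedly, settling the companion Conjectures~\ref{sp-lc} and \ref{ccp-lc}, to which \ref{conj-p} specializes on thin systems --- is the crux of the problem.
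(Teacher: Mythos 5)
This statement is posed in the paper as an open conjecture (Conjecture~\ref{conj-p}); the paper gives no proof of it, only the supporting observations that it specializes to the sextet polynomial on thin systems and that, via $\chi(H,x)=\phi(H,x+1)$ and Hoggar's theorem, it would imply Conjecture~\ref{ccp-lc}. So there is no argument in the paper against which to measure yours, and what you have written is a research programme rather than a proof. Your preliminary observations are sound: triphenylene is thin (it is cata-condensed, hence cannot contain coronene as a nice subgraph), so $\phi(T,x)=\sg(T,x)$ is not real-rooted and the Newton-inequality route of \S 2 cannot work in general; and the real-rooted base cases you list (non-branched cata-condensed systems via Gutman trees and \cite{GG81}, resonant systems with clawfree Clar graphs via \cite{Gut82,CS07}) are correctly disposed of.

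The genuine gap is the entire inductive step, and it is not a small one. You posit a deletion recurrence $\phi(H,x)=a(x)\phi(H',x)+\sum_j b_j(x)\phi(H_j,x)$ without verifying that such a recurrence exists for peri-condensed systems --- the ``proper sextet'' condition depends on a choice of Kekul\'e structure of all of $H$, so deleting a hexagon does not obviously produce polynomials $\phi(H_j,x)$ of genuine sub-hexagonal-systems with nonnegative weights. More importantly, you correctly note that log-concavity is not preserved under the sums that would appear, and the ``stronger invariant stable under hexagonal gluing'' that is supposed to rescue the induction (total positivity of transfer matrices, a coupled log-concavity-plus-interlacing hypothesis, or an explicit injection realizing $p(H,k-1)p(H,k+1)\le p(H,k)^2$) is named but never identified, let alone verified on even a single branched or peri-condensed example. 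Since you yourself flag the branched case as ``the crux of the problem,'' the proposal leaves the conjecture exactly where the paper leaves it: open.
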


Zhang and Zhang \cite[Theorem 2]{ZZ00} showed that
$\chi(H,x)=\phi(H,x+1)$
for every Kekul\'{e} hexagonal system $H$.
It is known \cite[Theorem 2]{Hog74} that
if a polynomial $f(x)$ with positive coefficients is log-concave,
then so is the polynomial $f(x+1)$.
Thus we conclude that if Conjecture \ref{conj-p} is true,
then so is Conjecture \ref{ccp-lc}.

It is known \cite[Theorem 2.8]{ZSS13} that if $H$ is a Kekul\'ean hexagonal system,
then $\chi(H,x)=C(R(H),x)$,
where $R(H)$ is the resonance graph of $H$
and $C(G,x)$ is the cube polynomial of a median graph $G$.
Bre\v{s}ar et al. \cite[Theorem 5.1]{BKS06} showed that
$C(G,x)$ has a real zero in the interval $[-2,-1)$.
It follows that the sextet polynomial of a thin hexagonal system has a real zero in the interval $[-1,0)$.
It is known that there is no cube zeros in the interval $[-1,+\infty)$
and there exists arbitrarily small negative real cube zeros
\cite[Corollary 3.2 and Proposition 5.2]{BKS06}.
Actually,
real cube zeros are dense in the interval $(-\infty,-1]$
from the proof of Theorem \ref{sp-zd}.

\section*{Acknowledgement}

This work was supported partially by the National Natural Science Foundation of China (Nos. 11771065, 11871304),
the Natural Science Foundation of Shandong Province of China (No. ZR2017MA025).


\end{document}